 \newtheorem{thm}{Theorem}[section]
 \newtheorem{cor}[thm]{Corollary}
 \newtheorem{prop}[thm]{Proposition}
 \theoremstyle{definition}
 \newtheorem{defn}[thm]{Definition}
 \theoremstyle{remark}
 \newtheorem{rem}[thm]{Remark}
 \numberwithin{equation}{section}
\newtheorem{theorem}{Theorem}[section]
\newtheorem{example}[theorem]{Example}
\DeclareMathOperator{\supp}{supp}
\DeclareMathOperator*{\esssup}{ess\,sup}
\begin{document}

\title[]{
Sobolev space of functions valued in a monotone Banach family
}

\author{Nikita Evseev}

\address{
Institute for Advanced Study in Mathematics, Harbin Institute of Technology, 150001, Harbin, People’s Republic of China\\
}

\address{%
Sobolev Institute of Mathematics\\
4 Acad. Koptyug avenue\\
630090 Novosibirsk\\
Russia}
\address{
Novosibirsk State University, 630090 Novosibirsk, Russia\\
}

\email{evseev@math.nsc.ru}

\author{Alexander Menovschikov}

\address{%
Sobolev Institute of Mathematics\\
4 Acad. Koptyug avenue\\
630090 Novosibirsk\\
Russia}
\address{
Novosibirsk State University, 630090 Novosibirsk, Russia\\
}

\email{menovschikov@math.nsc.ru}

\thanks{The work was supported by Russian Foundation for Basi Resear h (proje t no. 18-31-00089)}
\subjclass[2010]{46E40,  46E35}

\keywords{Sobolev spaces of vector-valued functions, $L^p$-direct integral, Bochner integral}

\date{}









\begin{abstract}
We apply the metrical approach to Sobolev spaces, which arise in various evolution PDEs. 
Functions from those spaces are defined on an interval and take values in a family of Banach spaces.
In this case we adapt the definition of Newtonian spaces.
For a monotone family, we show the existence of weak derivative, obtain an isomorphism to the standard Sobolev space, and provide some scalar characteristics.
\end{abstract}

\maketitle




\section{Introduction}

Various applied problems in biology, materials science, mechanics, etc, involve PDEs with solution spaces with internal structure that changes over time. 
As examples, we review some of the recent researches: \cite{BPS01} by M.~L. {Bernardi}, G.~A. {Pozzi}, and G.~{Savar\'e}, \cite{Paronetto2013} by F. Paronetto (equations on non-cylindrical domains), \cite{VM14} by M. Vierling, \cite{AES15-2} by A. Alphonse, C. M. Elliott, and B. Stinner (equations on evolving hypersurfaces),
\cite{MB08} by S.~{Meier} and M.~{B\"ohm}, \cite{ET2012} by J. Escher and D. Treutler (modeling of processes in a porous medium). 
Common to all of the above examples is that solution spaces could be represented as sets of functions valued in a family of Banach spaces. 
However, different problems impose different requirements on the relations between spaces within families, for example, the existence of isomorphisms, embeddings, bounded operators and so on.

In this article, we consider Sobolev spaces associated with the above problems from the point of view of metric analysis. 
Although the family of Banach spaces cannot always be represented as a metric space, the metric definition of Sobolev classes remains meaningful. 
Such a point of view on the studied spaces will make it possible to apply more universal and well-developed methods. In the 90s, several authors (L.~Ambrosio \cite{Ambrosio1990}, N.J.~Korevaar and R.M.~Schoen \cite{KorSch1993}, Yu.G.~Reshetnyak \cite{Reshetnyak97} and A.~Ranjbar-Motlagh \cite{Ranjbar98}) 
introduced and studied Sobolev spaces consisting of functions taking values in metric spaces. 
The case of functions defined on a non-Euclidean space is described by P.~Haj{\l}asz \cite{Hajlasz96} and J.~Heinonen, P.~Koskela, N.~Shanmugalingam, J.T.~Tyson \cite{HKST01}. 
In \cite{HKST01} it was shown that all previously developed approaches are equivalent. 
For a detailed treatment and for references to the literature on
the subject one may refer to books \cite{H2001} by J.~{Heinonen} and \cite{HP2000} by P.~{Haj{\l}asz} and P.~{Koskela}.

For our purposes, we adapt the following definition of Sobolev space (Newtonian spaces, for real-valued case see \cite{Shanmugalingam2000}, and \cite{HKST01} for Banach-valued case). 
\textit{Function $f:(M,\rho)\to(N,d)$ from the space $L^p(M;N)$ belongs to $W^{1,p}(M;N)$, if there exists scalar function $g\in L^p(M)$ such that
\begin{equation}\label{eqNewton}
d\big(f(\gamma(a)), f(\gamma(b))\big) \leq \sup\limits_{\gamma}\int_\gamma g\,d\sigma.
\end{equation} 
}
On the one hand, we have all the necessary objects to adapt this definition. 
On the other hand, in metric case, it allows us to introduce the concept of an upper gradient, establish the Poincar\'{e} inequality, show that a superposition with a Lipschitz function also is a Sobolev function.

The evolution structure of a specific problem could be described with the help of the following objects. 
Let $\{X_t\}_{t\in (0,T)}$ be a family of Banach spaces, $(0,T)\subset\mathbb R$, and suppose
that there is a set of operators $P(s,t):X_s\to X_t$ for $t\geq s$.
We consider functions with the property that $f(t)\in X_t$. Then inequality \eqref{eqNewton} turns to 
$$
\|f(t) - P(s,t)f(s)\|_{t} \leq \int_{s}^{t} g(\tau)\, d\tau, 
$$
and defines the space $W^{1,p}((0,T); \{X_t\})$.

The first natural question arises from this definition is: what is the meaning of the function $g(t)$? 
In the case of a monotone family of reflexive spaces, the answer to this question is given by theorem \ref{thm:main1}. Namely, under such assumptions, we can explicitly construct the weak derivative and show that its norm coincides with the smallest upper gradient of the original function.

In section \ref{Isomorphism} we establish the connection of the introduced space to the standard case. 
More precisely, suppose that there is a set of local isomorphisms $\Phi_t: X_t \to Y$. We are interested if there exists a global isomorphism between Sobolev spaces $W^{1,p}((0,T), \{X_t\})$ and $W^{1,p}((0,T), Y)$. 
Due to theorem \ref{theorem:main2}, the necessary and sufficient conditions for the existence of such an isomorphism are the close interconnection of $\Phi_t$ and the transition operators $P(s,t)$.

In section \ref{Scalar}, as an example, we give a comparison of our approach with that of Yu.G. Reshetnyak 
(recall that for metric spaces both came up with the same results). 
It turns out that Reshetnyak's approach does not allow taking into account the internal structure of the spaces under consideration.

\section{Sobolev space $W^{1,p}((0,T); \{X_t\})$}\label{Sobolev-ineq}
In this section, we give the definition of the main object - Sobolev functions valued in the family of Banach spaces. 
We also provide examples of families on which our methods can be applied.

\subsection{Monotone family $\{X_t\}$ as a vector space}

Let $V$ be a vector space, $(0,T)$ be an interval (not necessarily bounded)
equipped with the Lebesgue measure,
and $\{||\cdot||_{t}\}_{t\in (0,T)}$ be a family of semi-norms on $V$.
We will assume that for each $v\in V$ function $N(t,v) = \|v\|_t$
is non-increasing:
\begin{equation}\label{eq:A2}
N(t_1,v)\geq N(t_2,v), \text{ if } t_1\leq t_2. 
\end{equation}
Define Banach space $X_t$ to be a completion $V/\ker(\|\cdot\|_{t})$ with respect to $\|\cdot\|_{t}$.
Then $\{X_t\}$ is said to be a monotone family of Banach spaces (or a monotone Banach family).

For $t_1\leq t_2$ define operators $P(t_1,t_2):X_{t_1}\to X_{t_2}$ in the following way.
If $x\in X_{t_1}$ then there is a sequence $\{v_k\} \subset V$ such that it is a Cauchy sequence 
with respect to $\|\cdot\|_{t_1}$ and $x$ is its limit in $X_{t_1}$. 
Due to \eqref{eq:A2} $\{v_k\}$ is a Cauchy sequence with respect to $\|\cdot\|_{t_2}$,
thus there is its limit  $\tilde x$ in $X_{t_2}$. 
Put $P(t_1,t_2)x = \tilde x$. 
Now we can construct an addition. If $x_i\in X_{t_i}$ then
\begin{equation}\label{eq:addition}
x_1+x_2 := \begin{cases} P(t_1,t_2)x_1 + x_2, \text{ if } t_1\leq t_2,\\
x_1 + P(t_2,t_1)x_2, \text{ if } t_1 > t_2.
\end{cases}
\end{equation}
One can show that this operation satisfy  the associative and commutative  properties.
So the pare $(\bigcup_{t}X_t, +)$ is a vector space over $\mathbb R$.

\subsection{$L^p$-direct integral of Banach spaces}
We deal with the $L^p$-spaces of mappings
$f:(0,T)\to\bigcup_{t}X_t$ with the property that 
$f(t)\in X_t$ for each $t\in (0,T)$
(in other words, $f$ is a section of $\{X_t\}$ ).
To make this treatment rigorous, we apply the concept of direct integral of Banach spaces.
A brief account of the theory of direct integral is given below 
(for detailed presentation see \cite{HLR91} and \cite{JR17}).

Note that monotonicity condition \eqref{eq:A2} implies that $\{X_t\}$ is a measurable family 
of Banach spaces over $((0,T), dt, V)$ in the sense of \cite[Section 6.1]{HLR91}. 
\begin{defn}
A \textit{simple section} is a section $f$ for which there exist $n\in\mathbb N$, $v_1, \dots, v_n\in V$, and measurable sets $A_1, \dots, A_n\subset T$ such that $f(t) = \sum_{k=1}^n\chi_{A_k}\cdot v_k$ 
for all $t\in (T,0)$.
\end{defn}

\begin{defn}
A section $f$ of $\{X_t\}_{t \in (0,T)}$ is said to be \textit{measurable} if there exists a sequence of simple sections $\{f_k\}_{k\in\mathbb N}$ such that for almost all $t\in (0,T)$, $f_k(t)\to f(t)$ in $X_t$ as $k\to\infty$.
\end{defn}

The space of all equivalence classes of such measurable sections is a \textit{direct integral} 
$\int_{(0,T)}^{\oplus}X_t\,dt$ of a monotone family of Banach spaces $\{X_t\}_{t \in (0,T)}$
We will denote this space as $L^0((0,T); \{X_t\})$.

Note that for a measurable section $f$ the function $t\mapsto\|f(t)\|_{t}$ is measurable in the usual sense.
For every $p\in[1,\infty]$ the space 
$L^p((0,T); \{X_t\}) = \left(\int_{(0,T)}^{\oplus}X_t\,dt\right)_{L^p}$ (\textit{$L^p$-direct integral})
is defined 
as a space of all measurable sections $f$ such that the function $t\mapsto\|f(t)\|_{t}$ belongs to $L^p((0,T))$.
In this case 
$$
\|f\|_{L^p((0,T); \{X_t\})}
:=  
\begin{cases}
\left(\int_{0}^T\|f(t)\|^p_{t}\,dt\right)^{\frac{1}{p}}, &\text{ if } p<\infty,\\
\esssup\limits_{(0,T)} |f(t)|, &\text{ if } p=\infty
\end{cases}
$$
determines the norm on $L^p((0,T); \{X_t\})$.
\begin{prop}[{\cite[Proposition 3.2]{JR17}}]\label{lp-banach}
The space $L^p((0,T); \{X_t\})$ is a Banach space for all $1\leq p <\infty$.
\end{prop}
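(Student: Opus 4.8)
The plan is to carry out the classical Riesz--Fischer argument in the direct-integral setting, the only genuinely new ingredient being the measurability of the candidate limit section. First I would record that $L^p((0,T);\{X_t\})$ is a normed vector space: the linear operations on sections come from the pointwise operations on $\bigcup_t X_t$ defined in \eqref{eq:addition}, one checks that the sum of two measurable sections is again measurable (add the approximating simple sections), and that $t\mapsto\|f(t)\|_t$ is measurable whenever $f$ is. Homogeneity of $\|\cdot\|_{L^p}$ is immediate, and the triangle inequality is Minkowski's inequality in $L^p((0,T))$ applied to the scalar functions $t\mapsto\|f(t)\|_t$, using the pointwise bound $\|f(t)+h(t)\|_t\le\|f(t)\|_t+\|h(t)\|_t$ in $X_t$. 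Passing to equivalence classes of sections that agree a.e.\ makes $\|\cdot\|_{L^p}$ a genuine norm.

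For completeness, let $\{f_n\}$ be Cauchy in $L^p((0,T);\{X_t\})$ and pick a subsequence with $\|f_{n_{k+1}}-f_{n_k}\|_{L^p}\le 2^{-k}$. Put
\[
g(t)=\sum_{k=1}^{\infty}\|f_{n_{k+1}}(t)-f_{n_k}(t)\|_t .
\]
Minkowski's inequality together with the monotone convergence theorem gives $\|g\|_{L^p}\le\sum_k 2^{-k}=1$, so $g(t)<\infty$ for a.e.\ $t$. For each such $t$ the series $\sum_k\big(f_{n_{k+1}}(t)-f_{n_k}(t)\big)$ is absolutely convergent in the Banach space $X_t$, hence $f_{n_k}(t)$ converges in $X_t$ to some $f(t)$; on the remaining null set I set $f(t)=0\in X_t$. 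Since each $f_{n_k}$ is measurable and $f$ is the a.e.\ pointwise limit (in the respective $X_t$) of the $f_{n_k}$, the section $f$ is measurable as well --- this closure property of measurable sections under a.e.\ pointwise limits belongs to the direct-integral framework and is obtained by diagonalizing over the simple-section approximants of the $f_{n_k}$, cf.\ \cite[Section~6.1]{HLR91}. From $\|f(t)-f_{n_k}(t)\|_t\le\sum_{j\ge k}\|f_{n_{j+1}}(t)-f_{n_j}(t)\|_t\le g(t)$ a.e., with the right-hand tails tending to $0$ a.e., the dominated convergence theorem yields $\|f-f_{n_k}\|_{L^p}\to 0$; in particular $\|f(t)\|_t\le\|f_{n_1}(t)\|_t+g(t)\in L^p$, so $f\in L^p((0,T);\{X_t\})$. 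A Cauchy sequence with a convergent subsequence converges to the same limit, so $f_n\to f$, and completeness follows.

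The one step that is not a verbatim copy of the scalar proof --- and the one I expect to need the most care --- is the measurability of $f$. Everything else (Minkowski, the $2^{-k}$ extraction, dominated convergence) transfers mechanically, but to conclude that the pointwise-limit section is measurable one must produce simple sections approximating it a.e., and this is exactly where the structure of the measurable family $((0,T),dt,V)$ enters; it is handled as in the Bochner case, by a diagonal argument over the approximants of the $f_{n_k}$ controlled by the integrable majorant $g$.
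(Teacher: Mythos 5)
Your proof is correct and follows the standard Riesz--Fischer scheme (rapidly convergent subsequence, integrable majorant, fibrewise completeness of each $X_t$, closure of measurable sections under a.e.\ limits, dominated convergence), which is exactly the route taken in the reference: the paper itself offers no proof but quotes \cite[Proposition 3.2]{JR17}, whose argument is the one you reproduce. The only step you rightly flag as non-mechanical --- measurability of the limit section --- is indeed the framework lemma from \cite[Section 6.1]{HLR91} (an Egorov/Borel--Cantelli selection of simple approximants), so no gap remains.
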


Define \textit{sectional weak convergence} $f_{n}(t)\rightharpoonup f(t)$ for a.e. 
$t\in (0,T)$
as $\langle b'(t), f_n(t) \rangle_{t} \to \langle b'(t), f(t) \rangle_{t}$ a.e. 
for all $b'(t)\in X'_{t}$. 
Then applying a standard technique one can prove the following proposition.

\begin{prop}\label{prop:weak-lp}
Let $f_n\in L^p((0,T); \{X_t\})$, $\|f_n\|_{L^p((0,T); \{X_t\})}\leq C<\infty$ and 
$f_{n}(t)\rightharpoonup f(t)$ for a.e. $t\in (0,T)$.
Then $f\in L^p((0,T); \{X_t\})$ and $\|f\|_{L^p((0,T); \{X_t\})}\leq C$.
\end{prop}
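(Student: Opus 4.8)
The plan is to derive the conclusion from two classical ingredients: weak lower semicontinuity of the norm in each individual Banach space $X_t$, and Fatou's lemma on the interval $(0,T)$. First I would fix a null set $Z\subset(0,T)$ outside of which $f_n(t)\rightharpoonup f(t)$ in $X_t$. For every $t\notin Z$ I would pick, by the Hahn--Banach theorem, a functional $b'_t\in X'_t$ with $\|b'_t\|_{X'_t}=1$ and $\langle b'_t,f(t)\rangle_t=\|f(t)\|_t$; then the definition of sectional weak convergence together with $|\langle b'_t,f_n(t)\rangle_t|\le\|f_n(t)\|_t$ yields the pointwise estimate
$$
\|f(t)\|_t=\langle b'_t,f(t)\rangle_t=\lim_{n\to\infty}\langle b'_t,f_n(t)\rangle_t\le\liminf_{n\to\infty}\|f_n(t)\|_t,\qquad t\notin Z.
$$

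Next I would verify that $f$ is a \emph{measurable} section of $\{X_t\}$, since the hypothesis only supplies a pointwise weak limit; I expect this to be the one genuinely delicate point of the argument. Here I would use that $\{X_t\}$ is a measurable family over $((0,T),dt,V)$ in the sense of \cite[Section 6.1]{HLR91}: each $f_n$ being measurable, $t\mapsto\langle b'(t),f_n(t)\rangle_t$ is measurable for every measurable dual section $b'$, hence so is the a.e.\ pointwise limit $t\mapsto\langle b'(t),f(t)\rangle_t$; combined with the countable fundamental family of sections coming from $V$, this gives, by a Pettis-type argument, that $f$ is a measurable section, and in particular $t\mapsto\|f(t)\|_t$ is measurable by the remark preceding Proposition~\ref{lp-banach}.

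Finally I would apply Fatou's lemma. For $1\le p<\infty$, since $x\mapsto x^p$ is nondecreasing on $[0,\infty)$, the pointwise estimate gives $\|f(t)\|_t^p\le\liminf_n\|f_n(t)\|_t^p$ a.e., so that
$$
\int_0^T\|f(t)\|_t^p\,dt\le\int_0^T\liminf_{n\to\infty}\|f_n(t)\|_t^p\,dt\le\liminf_{n\to\infty}\int_0^T\|f_n(t)\|_t^p\,dt\le C^p,
$$
which shows $f\in L^p((0,T);\{X_t\})$ with $\|f\|_{L^p((0,T);\{X_t\})}\le C$. For $p=\infty$ the pointwise estimate already gives $\|f(t)\|_t\le\liminf_n\|f_n(t)\|_t\le\sup_n\|f_n\|_{L^\infty((0,T);\{X_t\})}\le C$ for a.e.\ $t$, hence $\esssup_{(0,T)}\|f(t)\|_t\le C$. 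Thus the substantive step is the measurability check; the norm bound itself is the standard Hahn--Banach-plus-Fatou computation, and this is presumably what is meant by "a standard technique".
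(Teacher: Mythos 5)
The paper gives no proof of this proposition at all (it only remarks that it follows by "a standard technique"), and your argument — Hahn--Banach norming functional for the pointwise lower semicontinuity $\|f(t)\|_t\le\liminf_n\|f_n(t)\|_t$, then Fatou for $p<\infty$ and the essential-supremum bound for $p=\infty$ — is exactly that standard technique, so the proposal is correct and in line with what the authors intend. The one place you only sketch is the measurability of the weak limit ("Pettis-type argument"): in the general direct-integral setting this requires a countable norming family of measurable dual sections, which is not automatic for an arbitrary monotone family, but it is available where the paper actually invokes the proposition (step 3 of the proof of Theorem \ref{thm:main1}, where the spaces are reflexive and the relevant subspaces $\tilde X_b$ separable), so your honest flagging of this point is appropriate and the rest of the proof stands.
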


\subsection{Sobolev space $W^{1,p}((0,T); \{X_t\})$}
As it was pointed out in the introduction, we adapt \eqref{eqNewton} and obtain the definition of Newtonian space for functions valued in a monotone family.

\begin{defn}\label{defn:W1}
A measurable section $u:T\to\bigcup_{t}X_t$ 
is said to be in the Sobolev space $W^{1,p}((0,T); \{X_t\})$ 
if $u\in L^p((0,T); \{X_t\})$ and if there exist
a function $g\in L^p((0,T))$ so that 
\begin{equation}\label{eq:criteria}
\|u(t) - P(s,t)u(s)\|_{t} \leq \int_{s}^{t} g(\tau)\, d\tau,
\end{equation}
for almost all $s, t\in (0,T)$, $s\leq t$. 
\end{defn}
A function $g$ satisfying \eqref{eq:criteria} is called 
a \textit{$p$-integrable upper gradient} of $u$ (or just upper gradient). 
If $u$ is a function in $W^{1,p}((0,T); \{X_t\})$,
let
$$
\|u\|_{W^{1,p}((0,T);\{X_t\})} := \|u\|_{L^{p}((0,T); \{X_t\})} + 
\inf\limits_{g} \|g\|_{L^{p}((0,T))},   
$$
where the infimum is taken over all upper $p$-integrable upper gradients $g$ of $u$.
It is assumed that $W^{1,p}((0,T); \{X_t\})$ consists of equivalence classes of functions,
where $f_1\sim f_2$ means $\|f_1 - f_2\|_{W^{1,p}((0,T);\{X_t\})} = 0$. 
Thus $W^{1,p}((0,T); \{X_t\})$ is a normed space, and it is a Banach space 
(see theorem \ref{theorem:W-Banach}).

\begin{rem}
It is worth noting that in definition \ref{defn:W1} we do not use the monotonicity of the family and for its correctness it is enough to have a family of operators $P(s,t)$ that produce a vector space structure on $\bigcup_{t}X_t$. 
However, in the next section, to construct a weak derivative, the monotonicity of the family will be essential.
\end{rem}

\subsection{Examples}
Here we will provide some more or less explicit examples. 
\begin{example}\label{ex1}
Let $\{\Omega_t\}_{t\in(0,T)}$ be a non-increasing family of measurable sets: $\Omega_t\subset\Omega_s$ if $s<t$. Let $\Omega_0 = \bigcup_{t}\Omega_t$.
As a core vector space $V$ choose the space of step functions on $\Omega_0$ and
define semi-norms $N(t,v) = \|v\|_{L^q(\Omega_t)}$.
Then family $\{L^q(\Omega_t)\}$ is monotone and operators $P(s,t):L^q(\Omega_s)\to L^q(\Omega_t)$ are restriction operators: $P(s,t)f = f_{|\Omega_t}$ for $f\in L^q(\Omega_s)$. 


\begin{figure}[h!]
\centering
\begin{minipage}{.5\textwidth}
  \centering
  \includegraphics[width=.9\linewidth]{./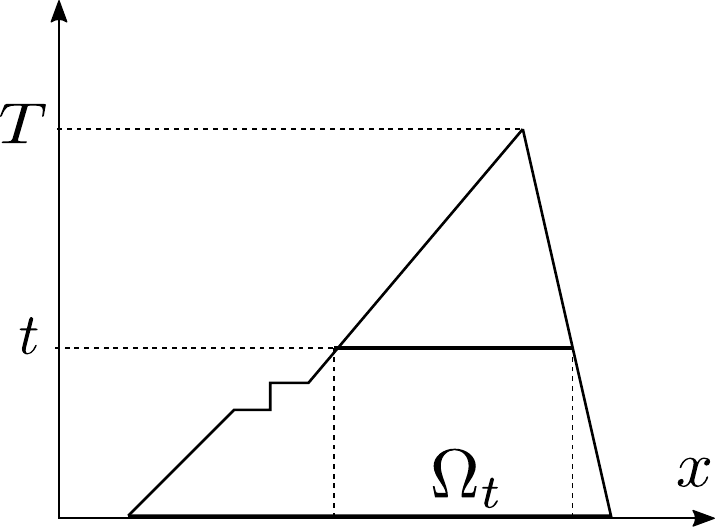}
  \caption{Example \ref{ex1}}
  \label{fig1}
\end{minipage}%
\begin{minipage}{.5\textwidth}
  \centering
  \includegraphics[width=.9\linewidth]{./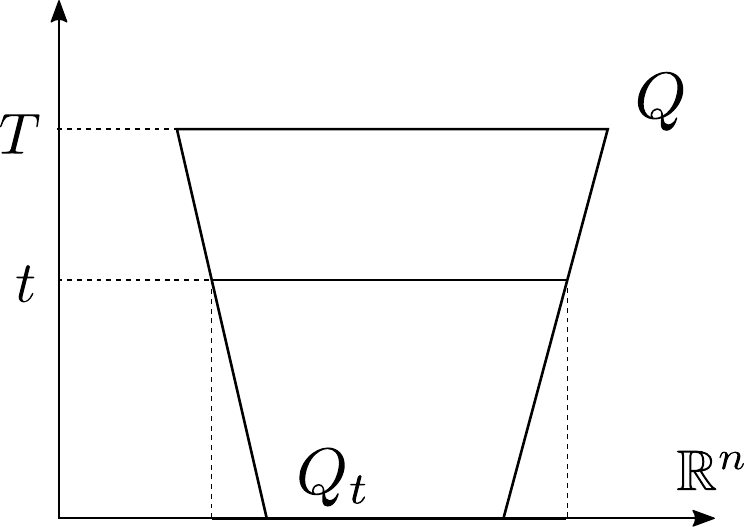}
  \caption{Example \ref{ex-Hilbert}}
  \label{fig2}
\end{minipage}
\end{figure}

Moreover, element $u \in L^p((0,T); \{L^q(\Omega_t)\})$ could be represented as a function $u(t,x)$
belonging to mixed norm Lebesgue space $L^{p,q}(\Omega)$, where $\Omega = \bigcup_{t}t\times\Omega_t$.   
\end{example}

\begin{example}[Evolving spaces]

In \cite{AES15} an abstract framework has been developed for
treating parabolic PDEs on evolving Hilbert spaces. Some applications of this method are in \cite{AES15-2, AE16, Djurdjevac17}. 

Here we compare the compatibility property from \cite{AES15}  and with construction.
As in \cite{AES15}, given a family of Hilbert spaces $\{X(t)\}_{t\in[0,T]}$
and a family of linear homeomorphisms $\phi_t:X(0)\to X(t)$.
For all $v\in X(0)$ and $u\in X(t)$ the following conditions are assumed

\begin{enumerate}[(C1)]
\item $\|\phi_tv\|_{X(t)} \leq C\|v\|_{X(0)}$,
\item $\|\phi_t^{-1}u\|_{X(0)} \leq C\|u\|_{X(t)}$,
\item $t\mapsto\|\phi_tv\|_{X(t)}$ is continuous.
\end{enumerate}

On the one hand, we can not formulate this structure in our settings straightway.
On the other hand, we can construct another family of Banach spaces such that $L^2$-direct integral of this family is isomorphic to space $L^2_X$  
form \cite[Definition 2.7]{AES15}. 
Set $V = X(0)$,
$N(t,v) = \|\phi_tv\|_{X(t)}$, 
and $P(s,t) = \phi_t\phi^{-1}_s$. 
Then condition (C3) implies that $\{(X(t), \|\cdot\|_{X(t)})\}$
is a measurable family.
\end{example}

\begin{example}[Composition operator]\label{ex-compop}
Let $\Omega_0$ be a domain in $\mathbb{R}^n$.
Let us consider the Sobolev space $W^{1,q}(\Omega_0; \mathbb{R})$ as a core vector space $V$.
We are going to construct a monotone family of Banach spaces which is generated by a family of quasi-isometric mappings $\varphi(t,\cdot):\Omega_0 \to \Omega_t$.
Each of these mapping induces isomorphism $C_{\varphi(t,\cdot)}:W^{1,q}(\Omega_t)\to W^{1,q}(\Omega_0)$ by the composition rule (\cite[Theorem 4]{V2005}).
Define $N(t,v) = \|C^{-1}_{\varphi(t,\cdot)}v\|_{W^{1,q}(\Omega_t)}$, and choose mappings $\varphi(t,\cdot)$ such that the family of norms is monotone.
As a result, we obtain spaces $X_t$, which consist of functions from $W^{1,q}(\Omega_0; \mathbb{R})$ and endowed with the norm $\|C^{-1}_{\varphi(t,\cdot)}v\|_{W^{1,q}(\Omega_t)}$.
Thus we can define the Sobolev space $W^{1,p}((0,T); \{X_t\})$ over this family in the sense of \ref{defn:W1}.
In that case, operators $P(s,t):W^{1,q}(\Omega_s)\to W^{1,q}(\Omega_t)$ are composition operators induced by $\varphi(s,\cdot)\circ\varphi^{-1}(t,\cdot):\Omega_t\to\Omega_s$.

\end{example}

\begin{example}[Monotone family of Hilbert spaces]\label{ex-Hilbert}
The next example is taken from \cite{BPS01}. 
In that work Cauchy-Dirichlet problems for linear Schrodinger-type equations in non-cylindrical domains are studied. 
Note that the monotonicity condition is important for their considerations. 

Let $Q\subset\mathbb R^n\times(0,T)$ be an open set and its sections 
$
Q_t = \{x\in R^n : (x,t)\in Q\}
$
be a non-decreasing family. Define $Q_T = \bigcup_{t} Q_t$ (see figure \ref{fig2}).

Let $V=H^1_0(Q_T)$ and $V_t$ be a completion of 
$\{v\in C_0^{\infty}(Q_T) :\supp v \subset Q_t \}$ with respect to the norm $\|\cdot\|_{H^1_0(Q_T)}$.
Let $\pi(t):H^1_0(\Omega_T)\to V_t$ be an orthogonal projector, then 
define $N(t, v) = \|\pi(t) v\|_{H^1_0(Q_T)}$. 
At the same time, $V_t$ is a completion $V/\ker(N(t,\cdot))$ with respect to $N(t, \cdot)$. 
Therefore operators $P(s,t)$ are just trivial extensions to $Q_t$. 
\end{example}

\section{Calculus of $\{X_t\}$-valued functions}
\label{section:calculus}
In this section and below we will denote as $t_1 \vee t_2$ the maximum of this numbers and as $t_1 \wedge t_2$ the minimum.
\subsection{Limit and continuity}

Here we introduce the concepts of limit, continuity and differentiability 
for $\{X_t\}$-valued functions. 
Due to the addition defined in \eqref{eq:addition} all basic properties are preserved for those notions.

\begin{defn}
An element $\xi\in X_{t_0}$ is said to be the limit of a section $f(t)$ for $t \to t_0$:
$\lim\limits_{t \to t_0} f(t) = \xi$, if
$
\|f(t) - \xi \|_{t_0 \vee t} \to 0  \text{ as } t \to t_0.
$
\end{defn}

\begin{defn}\label{def:continuity}
A section $f(t)$ is continuous at the point $t_0 \in (0,T)$, if
$$
\lim\limits_{t \to t_0} f(t) = f(t_0) \in X_{t_0}.
$$
\end{defn}
By $C(J; \{X_t\})$ we will denote the set of continuous functions at every point of $J\subset (0,T)$.

\begin{defn}[Fr\'echet derivative]\label{def:der}
A section $f(t)$ is differentiable at $t_0\in (0,T)$ if there exists $l_{t_0}\in X_{t_0}$ and, for every $\varepsilon>0$, exists $\delta>0$ such that
$$
\|f(t_0 + h) - f(t_0) - l_{t_0}h \|_{t_0 \vee (t_0 +h) } \leq \varepsilon|h|
$$
for all $|h|\leq\delta$. In what follows we denote $l_{t_0} = \dfrac{df}{dt}(t_0)$. 
\end{defn}

\begin{defn}
Let $[a,b]\subset (0,T)$ be a bounded interval.
A function $f:[a,b]\to\bigcup_{t\in[a,b]}X_t$ is said to be \textit{absolutely continuous}, if for any $\varepsilon>0$ there exist $\delta>0$ such that
$\sum_{i=1}^n\|f(b_i)-f(a_i)\|_{b_i} \leq \varepsilon$ 
for any collection of disjoint intervals $\{[a_i,b_i]\}\subset [a,b]$ such that
$\sum_{i=1}^n(b_i-a_i) \leq \delta$.
\end{defn}
A function $f:J\to\bigcup_{t\in J}X_t$ is said to be \textit{locally absolutely continuous} on a set $J$, if it is absolutely continuous for any interval $[a,b]\subset J$.

\subsection{Local Bochner integral}
Let there be given a simple function 
$$
s(t) = \sum\limits_{i=1}^{m}v_i\chi_{E_i},
$$
where $v_i\in V$ and $\{E_i\}\subset (0,T)$ is a disjointed collection of measurable sets of finite measure.
Then the integral is defined as
$$
\int^T_0 s(t)\, dt = \sum\limits_{i=1}^{m}v_i|E_i|.
$$
Now we introduce the notion of local integrability for a measurable section. 
\begin{defn}\label{def:integral}
A measurable function $f\in L^0((0,T);\{X_t\})$ is called \textit{locally integrable}, if for every compact set $J\subset (0,T)$ there exist a sequence of simple functions $\{s_k(t)\}$ such that 
\begin{equation}\label{int0}
\int_0^T\|\chi_J(t)\cdot f(t) - s_k(t)\|_{t}\, dt \to 0
\quad \text{ for }  k\to\infty.
\end{equation}
\end{defn}
Note that if the function $f$ is locally integrable, then for the sequence from definition \ref{def:integral} we have  
$
\lim\limits_{k\to\infty}\int_0^T\|s_k(t)\|_t\, dt  = \int_0^T\|\chi_J(t)\cdot f(t)\|_t \, dt.
$

\begin{prop}\label{int1}
Let $f\in L^0((0,T);\{X_t\})$ be locally integrable.
Then, for every compact set $J\subset (0,T)$, there exists $x\in X_{t^*}$, $t^* = \sup J$, such that, 
for any sequence of simple functions $s_k(t)$ with the property  
$\int_0^T\|\chi_J(t)\cdot f(t) - s_k(t)\|_{t}\, dt \to 0$, the following convergence holds
$$
\bigg\|\int_0^T s_k(t)\, dt  - x \bigg\|_{t^*}\to 0 \quad \text{ for } k\to\infty.  
$$
\end{prop}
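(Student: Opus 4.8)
The plan is to produce $x$ as the limit, in the Banach space $X_{t^*}$, of the vectors $\int_0^T s_k(t)\,dt$ (each of which lies in $V$ and hence maps into $X_{t^*}$), and then to check that this limit does not depend on the admissible sequence $\{s_k\}$. Since $X_{t^*}$ is complete (it is, by construction, a completion of $V/\ker\|\cdot\|_{t^*}$), it suffices to show two things: that $\{\int_0^T s_k\,dt\}_k$ is Cauchy in $X_{t^*}$ for every sequence satisfying \eqref{int0}, and that any two such sequences yield the same limit. At the outset I would also reduce to the case $\supp s_k\subset J$ by replacing $s_k$ with $\chi_J s_k$, which is again a simple section and still satisfies \eqref{int0}, because deleting mass off $J$ can only decrease $\int_0^T\|\chi_J(t)f(t)-\,\cdot\,\|_t\,dt$; the resulting $x$ then deserves the name $\int_J f\,dt$.

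The computational core is the elementary estimate: if $s=\sum_l w_l\chi_{F_l}$ is a simple section with the $F_l\subset(0,T)$ pairwise disjoint and $\bigcup_l F_l\subset(0,t^*]$, then by the triangle inequality in $X_{t^*}$ together with the monotonicity \eqref{eq:A2} (which gives $\|w_l\|_{t^*}\le\|w_l\|_\tau$ for every $\tau\in F_l$),
\[
\Bigl\|\int_0^T s(t)\,dt\Bigr\|_{t^*}=\Bigl\|\sum_l w_l|F_l|\Bigr\|_{t^*}\le\sum_l|F_l|\,\|w_l\|_{t^*}\le\sum_l\int_{F_l}\|w_l\|_\tau\,d\tau=\int_0^T\|s(t)\|_t\,dt.
\]
Applied to the difference $s_k-s_j$ (again a simple section supported in $J$), this yields
\[
\Bigl\|\int_0^T s_k\,dt-\int_0^T s_j\,dt\Bigr\|_{t^*}\le\int_0^T\|s_k(t)-s_j(t)\|_t\,dt\le\int_0^T\|\chi_J f-s_k\|_t\,dt+\int_0^T\|\chi_J f-s_j\|_t\,dt\xrightarrow[k,j\to\infty]{}0,
\]
so $\{\int_0^T s_k\,dt\}_k$ is Cauchy and converges to some $x\in X_{t^*}$. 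Independence of the choice then follows by a standard interleaving: given two admissible sequences $\{s_k\}$ and $\{s_k'\}$, the interleaved sequence $s_1,s_1',s_2,s_2',\dots$ again satisfies \eqref{int0}, hence its sequence of integrals is Cauchy and converges, which forces $\lim_k\int_0^T s_k\,dt=\lim_k\int_0^T s_k'\,dt$.

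The step I expect to be the main obstacle is the reduction used above to simple sections supported in $J$ (equivalently, in $(0,t^*]$). A priori an admissible $s_k$ may carry mass on $(t^*,T)\setminus J$; there $\chi_J f$ vanishes, so \eqref{int0} only provides $\int_{(t^*,T)}\|s_k(t)\|_t\,dt\to0$, while on $(t^*,T)$ the monotonicity \eqref{eq:A2} points the wrong way ($\|w\|_{t^*}\ge\|w\|_\tau$ for $\tau>t^*$), so one cannot directly bound $\|\int_{(t^*,T)}s_k\,dt\|_{t^*}$ by $\int_{(t^*,T)}\|s_k(t)\|_t\,dt$. The clean way around this is precisely the truncation $s_k\mapsto\chi_J s_k$ described above: it is harmless for \eqref{int0} and it makes the whole estimate go through verbatim. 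If one wishes to keep the untruncated integrals $\int_0^T s_k\,dt$ in the statement, the hypothesis should be strengthened to $\supp s_k\subset J$ (or one must establish $\int_{(t^*,T)}s_k\,dt\to0$ in $X_{t^*}$ by separate means); under that mild restriction the argument above is routine.
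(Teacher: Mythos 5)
Your argument follows the same basic route as the paper's proof: both produce $x$ as the limit of the Cauchy sequence $\int_0^T s_k(t)\,dt$ in the complete space $X_{t^*}$, using the triangle inequality for integrals of simple sections together with the monotonicity \eqref{eq:A2}, and both obtain independence of the approximating sequence by comparing two admissible sequences (you interleave them, the paper estimates the mixed difference directly --- a cosmetic difference). The substantive difference is your treatment of the region $t>t^*$. The paper's chain of inequalities passes from $\big\|\int_0^T s_k\,dt-\int_0^T s_m\,dt\big\|_{t^*}\le\int_0^T\|s_k(t)-s_m(t)\|_{t^*}\,dt$ to the bound by $\int_0^T\|\chi_J f-s_k\|_t\,dt+\int_0^T\|\chi_J f-s_m\|_t\,dt$, which tacitly uses $\|\cdot\|_{t^*}\le\|\cdot\|_{t}$; by \eqref{eq:A2} this holds only for $t\le t^*$, and on $(t^*,T)$ the inequality is reversed, exactly as you observe. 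Your truncation $s_k\mapsto\chi_J s_k$ repairs this cleanly, is consistent with remark \ref{rem:3.9}, and suffices for the way the integral is subsequently defined. Your caveat is also well taken: for sequences allowed to carry mass on $(t^*,T)$, condition \eqref{int0} only controls $\int_{(t^*,T)}\|s_k(t)\|_t\,dt$, which does not control $\big\|\int_{(t^*,T)}s_k\,dt\big\|_{t^*}$ (if the seminorms degenerate past $t^*$ one can add spikes after $t^*$ that are invisible to the norms $\|\cdot\|_t$, $t>t^*$, yet shift the limit in $X_{t^*}$), so the proposition as literally phrased needs either your support restriction on the $s_k$ or an additional hypothesis excluding such degeneration. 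In short, your proof is correct and, at the one delicate point, more careful than the paper's own argument.
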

\begin{proof} 1) Let $J\subset (0,T)$ be a compact set and $t^* = \sup J$.
Next we prove that the sequence $\int_0^T s_k(t)\, dt$ is fundamental in $X_{t^*}$.
\begin{multline*}
\bigg\|\int_0^T s_k(t)\, dt  - \int_0^T s_m(t)\, dt \bigg\|_{t^*}
\leq \int_0^T \| s_k(t) - s_m(t)\|_{t^*} \, dt\\
\leq \int_0^T \| \chi_J(t)\cdot f(t) - s_m(t)\|_{t} \, dt
+ \int_0^T \| \chi_J(t)\cdot f(t) - s_k(t)\|_{t} \, dt
\to 0
\end{multline*}
for $k,m\to\infty$. Hence, there is $x\in X_{t^*}$ such that 
$\lim\limits_{k\to\infty} \int_0^T s_k(t)\, dt = x$  in $ X_{t^*}$.

2) If, for another sequence of simple functions $r_k(t)$, it is true that 
$\int_0^T\|\chi_J(t)\cdot f(t) - r_k(t)\|_{t}\, dt \to 0$ for $k\to\infty$, then
\begin{multline*}
\bigg\|\int_0^T r_k(t)\, dt  - x \bigg\|_{t^*}
\leq \int_0^T \| \chi_J(t)\cdot f(t) - r_k(t)\|_{t} \, dt\\
+ \int_0^T \| \chi_J(t)\cdot f(t) - s_k(t)\|_{t} \, dt
+\bigg\|\int_0^T s_k(t)\, dt  - x \bigg\|_{t^*}
\to 0.
\end{multline*}  
\end{proof}

\begin{defn}
The integral over compact set $J\subset (0,T)$ of a locally integrable function is an element $x$ from proposition \ref{int1}, i.~e.
\begin{equation}\label{eq:integral}
\int_J f\, dt = \int_0^T \chi_J(t)\cdot f(t)\, dt
: = \lim\limits_{k\to\infty} \int_0^T s_k(t)\, dt = x \in X_{t^*},
\end{equation} 
where $t^* = \sup J$.
\end{defn}
We say that $f\in L^1_{loc}((0,T);\{X_t\})$, if $\|f(t)\|_t\in L^1_{loc}((0,T))$.
In essence, the introduced integral is a local version of the Bochner integral. 
Integral \eqref{eq:integral} has the usual additivity property.
Thus, for two intervals $(a,b_1)\subset(a,b_2)\subset (0,T)$, we have the equality
$$
\int^{b_1}_a f(t)\, dt - \int^{b_2}_a f(t)\, dt
= -\int^{b_2}_{b_1} f(t)\, dt
$$

Let us prove an analog of Bochner's theorem:
\begin{thm}\label{theorem:Bochner}
A measurable function $f\in L^0((0,T);\{X_t\})$ is locally integrable if and only if $f\in L^1_{loc}((0,T);\{X_t\})$.
For any compact set $J\subset (0,T)$ the estimate holds
$$
\bigg\|\int_J f(t)\, dt\bigg\|_{t^*}
\leq \int_J \|f(t)\|_{t^*}\, dt \,
\bigg(\leq \int_J \|f(t)\|_{t}\, dt \bigg),
$$
where $t^* = \sup J$.
\end{thm}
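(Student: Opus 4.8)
The plan is to run the classical proof of Bochner's theorem, localized to compact subintervals, with the monotonicity \eqref{eq:A2} playing the role normally played by a fixed ambient norm.

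For the ``only if'' part, assume $f$ is locally integrable and fix a compact $J\subset(0,T)$ with $t^*=\sup J$; let $\{s_k\}$ be as in Definition \ref{def:integral}. Each $s_k$ is simple, so $t\mapsto\|s_k(t)\|_t=\sum_i\chi_{E_i}(t)N(t,v_i)$ is measurable (each $N(\cdot,v_i)$ is measurable, being monotone by \eqref{eq:A2}) and integrable. The reverse triangle inequality gives $\int_0^T\bigl|\,\|s_k(t)\|_t-\|s_m(t)\|_t\,\bigr|\,dt\le\int_0^T\|s_k(t)-s_m(t)\|_t\,dt\le\int_0^T\|\chi_J f-s_k\|_t\,dt+\int_0^T\|\chi_J f-s_m\|_t\,dt\to 0$, so $\{\,t\mapsto\|s_k(t)\|_t\,\}$ is Cauchy in $L^1((0,T))$; passing to a subsequence along which $s_k(t)\to\chi_J(t)f(t)$ in $X_t$ for a.e.\ $t$ (possible since $\int_0^T\|\chi_J f-s_k\|_t\,dt\to 0$), its $L^1$-limit equals $t\mapsto\|\chi_J(t)f(t)\|_t$. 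Hence $\|f(t)\|_t\in L^1(J)$, and as $J$ was arbitrary, $\|f(t)\|_t\in L^1_{loc}((0,T))$.

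For the ``if'' part, assume $\|f(t)\|_t\in L^1_{loc}((0,T))$ and fix a compact $J$. By measurability of $f$ pick simple sections $f_k$ with $f_k(t)\to f(t)$ in $X_t$ for a.e.\ $t$, and set $A_k=\{t\in J:\|f_k(t)\|_t\le 2\|f(t)\|_t+1\}$ and $s_k=\chi_{A_k}\cdot f_k$. Both $t\mapsto\|f_k(t)\|_t$ (as above) and $t\mapsto\|f(t)\|_t$ are measurable, so $A_k$ is measurable and $s_k$ is a simple function supported in $J$. On $J$ one has $\|s_k(t)\|_t\le 2\|f(t)\|_t+1$, hence $\|s_k(t)-\chi_J(t)f(t)\|_t\le 3\|f(t)\|_t+1\in L^1(J)$, while $s_k(t)\to f(t)$ in $X_t$ a.e.\ on $J$ (where $f_k(t)\to f(t)$ one eventually has $t\in A_k$). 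Dominated convergence then gives $\int_0^T\|\chi_J f-s_k\|_t\,dt\to 0$, so $f$ is locally integrable; by Proposition \ref{int1} this sequence may be used to compute $\int_J f\,dt$.

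Finally the estimate. For a simple $s=\sum_i v_i\chi_{E_i}$ supported in $J$ with disjoint $E_i$, $\bigl\|\int_0^T s\,dt\bigr\|_{t^*}=\bigl\|\sum_i|E_i|v_i\bigr\|_{t^*}\le\sum_i|E_i|N(t^*,v_i)=\int_0^T\|s(t)\|_{t^*}\,dt$. Apply this to the $s_k$ just constructed. Since $t\le t^*$ on $J$, monotonicity \eqref{eq:A2} gives $\|v\|_{t^*}\le\|v\|_t$ (equivalently $\|P(t,t^*)x\|_{t^*}\le\|x\|_t$), whence $\int_0^T\|s_k(t)-\chi_J(t)f(t)\|_{t^*}\,dt\le\int_J\|s_k(t)-\chi_J(t)f(t)\|_t\,dt\to 0$; therefore $\int_0^T s_k\,dt\to\int_J f\,dt$ in $X_{t^*}$ and $\int_0^T\|s_k(t)\|_{t^*}\,dt\to\int_J\|f(t)\|_{t^*}\,dt$. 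Letting $k\to\infty$ in the displayed inequality yields $\bigl\|\int_J f\,dt\bigr\|_{t^*}\le\int_J\|f(t)\|_{t^*}\,dt$, and $\int_J\|f(t)\|_{t^*}\,dt\le\int_J\|f(t)\|_t\,dt$ is immediate from \eqref{eq:A2}. The main obstacle I expect is the ``if'' direction: producing a single sequence of bona fide simple functions that simultaneously approximates $\chi_J f$ in the $\|\cdot\|_t$-integral and stays dominated, which forces the truncation sets $A_k$ to be genuinely measurable --- precisely where monotonicity of $t\mapsto N(t,v)$ is needed --- together with the passage from $\|\cdot\|_t$-convergence to the $\|\cdot\|_{t^*}$-estimate, the other place monotonicity is indispensable.
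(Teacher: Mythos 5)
Your proof is correct and follows essentially the same route as the paper: the ``if'' direction uses the same truncation of approximating simple sections (kept only where dominated by a multiple of $\|f(t)\|_t$, restricted to $J$) followed by dominated convergence, while your ``only if'' argument via an $L^1$-Cauchy sequence and a.e.\ subsequence is just a longer path to what the paper gets in one line from $\int_J\|f(t)\|_t\,dt\le\int_0^T\|\chi_J f-s_k\|_t\,dt+\int_0^T\|s_k(t)\|_t\,dt$ for a single fixed $k$. As a bonus, you prove the norm estimate explicitly (triangle inequality for simple sections plus passage to the limit using monotonicity to trade $\|\cdot\|_t$ for $\|\cdot\|_{t^*}$), a step the paper states but does not write out.
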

\begin{proof}
Let $f(t)$ be locally integrable. 
For arbitrary compact set $J\subset (0,T)$ there is a sequence of simple functions $s_k(t)$ such that convergence \eqref{int0} holds. 
Then
$$
\int_J \|f(t)\|_{t}\, dt 
\leq \int_0^T\|\chi_J(t)\cdot f(t) - s_k(t)\|_{t}\, dt
+ \int_0^T\|s_k(t)\|_{t}\, dt.
$$
The right hand of inequality is finite, thus $f\in L^1_{loc}((0,T);\{X_t\})$.

Now let $f\in L^1_{loc}((0,T);\{X_t\})$. 
Consider a sequence of simple functions $\{s_k(t)\}$ such that
$\|f(t) - s_k(t)\|_{t} \to 0$ a.e. 
Let $J\subset (0,T)$ be a compact set. Define a new sequence of simple functions
$$
r_k(t) =
\begin{cases}
s_k(t), &\text{ if } \|s_k(t)\|_t\leq 2\|f(t)\|_t \text{ and } t\in J,\\
0, &\text{ otherwise. }
\end{cases}
$$

Then $\|\chi_J(t)\cdot f(t) - r_k(t)\|_{t} \to 0$ a.e.
Further,  $\|\chi_J(t)\cdot f(t) - r_k(t)\|_{t} 
\leq \|r_k(t)\|_{t} + \|\chi_J(t)\cdot f(t)\|_{t}
\leq 3\chi_J(t)\cdot\|f(t)\|_t$ a.e.
So $\|\chi_J(t)\cdot f(t) - r_k(t)\|_{t}$ has an integrable majorant, and by the Lebesgue theorem we obtain
$$
\int_0^T\|\chi_J(t)\cdot f(t) - r_k(t)\|_{t}\, dt \to 0
\quad \text{ for }  k\to\infty.
$$ 
Hence, the function $f(t)$ is locally integrable. 
\end{proof}

\begin{rem}\label{rem:3.9}
Let $f(t)$ be locally integrable.
Then for any compact set $J\subset (0,T)$ a sequence of simple functions $\{s_k(t)\}$ 
as in definition \ref{def:integral} can be chosen so that $\supp s_k(t)\subset J$ and 
$\|s_k(t)\|_t \leq 2\|f(t)\|_{t}$.
\end{rem}

\begin{prop}[Lebesgue's Differentiation Theorem]\label{prop:lebesgue-th}
  Let $f \in L^1_{loc}((0,T);\{X_t\})$. Then, for $h>0$,
  \begin{equation}\label{lebesgue1}
    \lim\limits_{h \to 0} \frac{1}{h} \int_{t-h}^{t} \| f(s) - f(t) \|_t \, ds = 0.
  \end{equation}
  In particular,
  \begin{equation}\label{lebesgue2}
    f(t) = \lim\limits_{h \to 0} \frac{1}{h} \int_{t-h}^{t} f(s) \, ds.
  \end{equation}
\end{prop}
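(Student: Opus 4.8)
The plan is to reduce the statement to the classical scalar Lebesgue differentiation theorem applied to the function $t \mapsto \|f(t)\|_t$ together with a density argument for simple sections. First I would fix a compact interval $J = [a,b] \subset (0,T)$ on which we work, so that all the relevant values $f(s)$ for $s$ near a given $t \in (a,b)$ lie in spaces $X_s$ with $s \leq b$, and the transition operators $P(s,t)$ are available. The quantity $\frac{1}{h}\int_{t-h}^t \|f(s) - f(t)\|_t\,ds$ makes sense because for $s \leq t$ the difference $f(s) - f(t)$ is computed in $X_t$ via \eqref{eq:addition} as $P(s,t)f(s) - f(t)$, and by Theorem \ref{theorem:Bochner} the integrand $s \mapsto \|f(s)-f(t)\|_t$ is in $L^1_{loc}$.

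Next I would treat the case of a simple section $s(t) = \sum_{i} v_i \chi_{E_i}$ with $v_i \in V$. Here $\|s(r) - s(t)\|_t$, for a.e. $t$ in the interior of some $E_i$ and $r$ close to $t$, equals $\|v_i - v_i\|_t = 0$ once $r$ is also in $E_i$, so \eqref{lebesgue1} holds trivially for simple sections at almost every point (namely, at Lebesgue density points of each $E_i$). For a general $f \in L^1_{loc}((0,T);\{X_t\})$, by Remark \ref{rem:3.9} I would pick simple functions $s_k$ with $\supp s_k \subset J$, $\|s_k(t)\|_t \leq 2\|f(t)\|_t$, and $\int_J \|f(t) - s_k(t)\|_t\,dt \to 0$. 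Then I would estimate, using monotonicity \eqref{eq:A2} to pass all norms to $\|\cdot\|_t$,
\begin{multline*}
\frac{1}{h}\int_{t-h}^t \|f(r) - f(t)\|_t\,dr
\leq \frac{1}{h}\int_{t-h}^t \|f(r) - s_k(r)\|_t\,dr\\
+ \frac{1}{h}\int_{t-h}^t \|s_k(r) - s_k(t)\|_t\,dr
+ \|s_k(t) - f(t)\|_t.
\end{multline*}
For the first term, since $r \leq t$ gives $\|f(r)-s_k(r)\|_t \leq \|f(r)-s_k(r)\|_r$ by \eqref{eq:A2}, the quantity $\limsup_{h\to 0}\frac{1}{h}\int_{t-h}^t \|f(r)-s_k(r)\|_t\,dr$ is bounded by the scalar maximal function of $r \mapsto \|f(r)-s_k(r)\|_r$ at $t$; the middle term tends to $0$ for a.e. $t$ by the simple-section case; the last term tends to $0$ for a.e. $t$ along a subsequence. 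A standard argument — controlling the exceptional set where the maximal function of $\|f - s_k\|$ exceeds $\lambda$ via the weak-type $(1,1)$ inequality, and letting $k\to\infty$ — then shows the $\limsup$ is $0$ a.e., giving \eqref{lebesgue1}.

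Finally, \eqref{lebesgue2} follows from \eqref{lebesgue1} by the estimate in Theorem \ref{theorem:Bochner}: writing $t^* = t$ for the interval $[t-h,t]$,
$$
\Big\| \frac{1}{h}\int_{t-h}^t f(s)\,ds - f(t) \Big\|_t
= \Big\| \frac{1}{h}\int_{t-h}^t (f(s) - f(t))\,ds \Big\|_t
\leq \frac{1}{h}\int_{t-h}^t \|f(s) - f(t)\|_t\,ds \to 0,
$$
where the constant section $s \mapsto f(t)$ is integrated in the sense of \eqref{eq:integral} with value $f(t)$ (using additivity and the definition of the integral). The main obstacle I anticipate is the bookkeeping around the exceptional null set: one must ensure that the density points of the $E_i$ for the simple functions $s_k$, the Lebesgue points of the scalar functions $\|f - s_k\|$, and the points where $s_k(t) \to f(t)$ in $X_t$ can all be intersected over countably many $k$ into one full-measure set, and that the weak-type $(1,1)$ bound is applied correctly to pass the $\limsup$ through the limit in $k$. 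The passage of norms via monotonicity is what makes the scalar maximal function argument legitimate here, and it is the one place the hypothesis \eqref{eq:A2} is genuinely used.
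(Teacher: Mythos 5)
Your argument is correct, but it follows a different route than the paper. The paper proves \eqref{lebesgue1} by the classical ``dense sequence'' (Pettis-type) argument: it fixes a dense sequence $\{x_n(t)\}$ in $X_t$, applies the scalar Lebesgue differentiation theorem to each real function $s\mapsto\|f(s)-x_n(t)\|_t$ to get a common full-measure set, and then, given $\varepsilon>0$, picks $n$ with $\|f(t)-x_n(t)\|_t<\varepsilon/2$ so that the $\limsup$ of the averages is at most $2\|f(t)-x_n(t)\|_t<\varepsilon$; the second assertion is deduced from the Bochner-type estimate of Theorem \ref{theorem:Bochner}, exactly as you do. Your proof instead runs the other standard strategy: verify the statement directly for simple sections at density points of the level sets, approximate $f$ by simple sections as in Remark \ref{rem:3.9}, and absorb the error terms via the weak-type $(1,1)$ bound for the (one-sided) Hardy--Littlewood maximal function together with Chebyshev, intersecting countably many exceptional null sets. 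What your route buys is that you never need to produce a measurably varying dense family $\{x_n(t)\}\subset X_t$ --- a point the paper treats rather casually, since separability of the fibers has to come from the essential separable-valuedness of $f$ --- at the price of invoking the maximal inequality, which the paper's softer argument avoids. Both proofs use the monotonicity \eqref{eq:A2} (extended to $\|P(r,t)x\|_t\leq\|x\|_r$ for $x\in X_r$) in the same essential way to compare fiber norms, and both leave the same minor point implicit in \eqref{lebesgue2}, namely that the constant ``section'' $s\mapsto f(t)$ with $f(t)\in X_t$ (not necessarily in $V$) can be integrated, which is justified by approximating $f(t)$ in $X_t$ or by estimating directly against the approximating simple sections, as you indicate.
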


\begin{proof}
  Let us choose a sequence $\{x_n(t)\}_{n \in \mathbb{N}}$, that is dense in $X_t$. For every $n \in \mathbb{N}$ we consider real-valued function $\| f(t) - x_n(t) \|_t$. Applying the real-valued Lebesgue's differentiation theorem, we can find a set $E_n \subset \mathbb{R}$ for all $n \in \mathbb{N}$ such that
  $$
    \| f(t) - x_n(t) \|_t = \lim\limits_{h \to 0} \frac{1}{h} \int_{t-h}^{t} \| f(s) - x_n(t) \|_t \, ds
  $$  
  for all $t \notin E_n$. Further, for arbitrary $\varepsilon > 0$ and $t \notin \bigcup_{n \in \mathbb{N}} E_n$, there is a number $n$ such that $\| f(t) - x_n(t) \|_t < \frac{\varepsilon}{2}$. 
Using the inequality from remark \ref{rem:3.9}, we have
  \begin{multline*}
    0 \leq \lim\limits_{h \to 0} \frac{1}{h} \int_{t-h}^{t} \| f(s) - f(t) \|_t \, ds\\ 
	\leq \lim\limits_{h \to 0} \frac{1}{h} \int_{t-h}^{t} \| f(s) - x_n(t) \|_t + \| x_n(t) - f(t) \|_t \, ds 
	= 2 \| f(t) - x_n(t) \|_t < \varepsilon.
  \end{multline*}  
  
  Due to arbitrariness of choosing $\varepsilon$, statement \eqref{lebesgue1} of the theorem is proven. The second assertion follows from the first and analogue of Bochner theorem \ref{theorem:Bochner}.
\end{proof}

\begin{prop}\label{prop:Mh}
  Let $f$ belong to $L^p(\mathbb{R};\{X_t\})$, $1 \leq p < \infty$. For every $h > 0$ we define a new function $M_h f$ as
  $$
    M_h f(t) = \frac{1}{h}\int_{t-h}^{t} f(s) \, ds.
  $$
  Then $M_h f$ belongs to $L^p(\mathbb{R};\{X_t\}) \cap C(\mathbb{R};\{X_t\})$ and $\lim\limits_{h \to 0} M_h f = f$ a.e. and in $L^p(\mathbb{R};\{X_t\})$. 
\end{prop}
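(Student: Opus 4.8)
The plan is to mimic the classical mollification argument, splitting the statement into three parts: (i) $M_h f \in C(\mathbb{R};\{X_t\})$; (ii) $M_h f \in L^p(\mathbb{R};\{X_t\})$ with a uniform-in-$h$ bound; (iii) $M_h f \to f$ a.e. and in $L^p$. Part (iii) is essentially done already: the a.e.\ convergence is exactly assertion \eqref{lebesgue2} of Proposition \ref{prop:lebesgue-th}, so only the $L^p$-convergence needs an additional argument, and that will follow from (ii) together with a density argument.

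First I would treat continuity. Fix $t_0$ and $h>0$; for $t>t_0$ we must estimate $\|M_hf(t)-M_hf(t_0)\|_{t}$, using the convention $t_0\vee t=t$. Writing $M_hf(t)-M_hf(t_0)=\frac1h\int_{t_0}^{t}f(s)\,ds-\frac1h\int_{t_0-h}^{t-h}f(s)\,ds$ via the additivity of the integral, and applying the norm estimate of Theorem \ref{theorem:Bochner} (which bounds $\|\int_J f\|_{t^*}$ by $\int_J\|f(s)\|_{t^*}\,ds\le\int_J\|f(s)\|_s\,ds$ since the norms are non-increasing and $s\le t^*$ on $J$), we get
$$
\|M_hf(t)-M_hf(t_0)\|_{t}\le\frac1h\int_{t_0}^{t}\|f(s)\|_s\,ds+\frac1h\int_{t_0-h}^{t-h}\|f(s)\|_s\,ds,
$$
and both terms tend to $0$ as $t\to t_0^+$ by absolute continuity of the Lebesgue integral of $\|f(\cdot)\|_{(\cdot)}\in L^p_{\mathrm{loc}}\subset L^1_{\mathrm{loc}}$. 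The case $t<t_0$ is symmetric (now $t_0\vee t=t_0$, and one estimates in $X_{t_0}$, again using monotonicity to dominate $\|f(s)\|_{t_0}$ by $\|f(s)\|_s$ only where $s\le t_0$; where $s>t_0$ one keeps $\|f(s)\|_{t_0}$, which is still integrable). This also shows each $M_hf(t)$ genuinely lies in $X_t$, so $M_hf$ is a bona fide section.

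Next, the $L^p$-bound. Pointwise, Theorem \ref{theorem:Bochner} gives $\|M_hf(t)\|_t\le\frac1h\int_{t-h}^{t}\|f(s)\|_t\,ds\le\frac1h\int_{t-h}^{t}\|f(s)\|_s\,ds=(M_h\varphi)(t)$ where $\varphi(s):=\|f(s)\|_s\in L^p(\mathbb{R})$; here the middle step uses monotonicity, $s\le t$ on the interval of integration. The scalar averaging operator $M_h$ is a contraction (in fact bounded by $1$) on $L^p(\mathbb{R})$ by Jensen/Minkowski's integral inequality, so $\|M_hf\|_{L^p(\mathbb{R};\{X_t\})}\le\|M_h\varphi\|_{L^p(\mathbb{R})}\le\|\varphi\|_{L^p(\mathbb{R})}=\|f\|_{L^p(\mathbb{R};\{X_t\})}$, which in particular shows $M_hf\in L^p(\mathbb{R};\{X_t\})$. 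Measurability of $M_hf$ as a section is inherited from the approximating simple functions in the definition of the integral, or directly from the just-proved continuity.

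Finally, $L^p$-convergence. By (ii), the operators $M_h$ are uniformly bounded on $L^p(\mathbb{R};\{X_t\})$; and on simple sections $s(t)=\sum v_i\chi_{A_i}$ one checks directly (reducing to the scalar case $M_h\chi_{A_i}\to\chi_{A_i}$ in $L^p(\mathbb{R})$ and using $\|M_hs(t)-s(t)\|_t\le\sum\|v_i\|_t\,|M_h\chi_{A_i}(t)-\chi_{A_i}(t)|$, then dominated convergence) that $M_hs\to s$ in $L^p(\mathbb{R};\{X_t\})$. Since simple sections are dense, a standard $\varepsilon/3$ argument gives $M_hf\to f$ in $L^p$ for every $f$. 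Combined with \eqref{lebesgue2} for the a.e.\ statement, this completes the proof.

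The main obstacle I anticipate is the bookkeeping with the non-increasing norms when proving continuity: one must be careful that in the average $\frac1h\int_{t-h}^{t}f(s)\,ds$ the relevant target space is $X_{\sup}=X_t$, so the estimate $\|f(s)\|_{X_t}\le\|f(s)\|_{X_s}$ is available on the whole interval $[t-h,t]$ and yields a scalar bound in $L^p(\mathbb{R})$ — but when comparing two such averages at different times $t_0<t$ one of the two integrals runs over $[t_0-h,t-h]$, whose supremum $t-h$ may be less than $t$, and one must check that reindexing to a common larger target space does not break the domination. Monotonicity makes all of this go through, but it is the one place where the argument genuinely uses the hypothesis $N(t_1,v)\ge N(t_2,v)$ for $t_1\le t_2$ rather than just the vector-space structure.
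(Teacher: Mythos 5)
The paper states Proposition \ref{prop:Mh} without any proof, so there is no argument of the authors to compare yours against; judged on its own, your proof is correct in structure and uses exactly the machinery the paper sets up for this purpose: additivity of the local integral plus the norm estimate of Theorem \ref{theorem:Bochner} for continuity, the monotonicity \eqref{eq:A2} to reduce the $L^p$-bound to the scalar averaging operator acting on $\varphi(s)=\|f(s)\|_s$, the a.e.\ statement from \eqref{lebesgue2} of Proposition \ref{prop:lebesgue-th}, and uniform boundedness plus density for the $L^p$-convergence. Three small points deserve an explicit line in a careful write-up. First, density: an arbitrary simple section need not lie in $L^p(\mathbb R;\{X_t\})$ (the function $t\mapsto\|v_i\|_t$ may be unbounded to the left), so one should approximate $f$ by the truncated simple sections $r_k$ from the proof of Theorem \ref{theorem:Bochner}, which satisfy $\|r_k(t)\|_t\le 2\|f(t)\|_t$ and hence converge to $f$ in $L^p$. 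Second, on a fixed simple section your ``dominated convergence'' step is not literally a domination by an $h$-independent $L^p$ majorant; it does go through, but only by using monotonicity once more: since $|M_h\chi_{A_i}-\chi_{A_i}|\le 1$,
$$
\int \|v_i\|_t^p\,|M_h\chi_{A_i}(t)-\chi_{A_i}(t)|^p\,dt
\le \frac1h\int_0^h\int_{(A_i+r)\triangle A_i}\|v_i\|_t^p\,dt\,dr,
$$
and on $(A_i+r)\setminus A_i$ one has $t-r\in A_i$ with $\|v_i\|_t\le\|v_i\|_{t-r}$ by \eqref{eq:A2}, so both pieces of the symmetric difference are controlled by the finite measure $\|v_i\|_t^p\,dt$ restricted to $A_i$ and tend to $0$ as $h\to0$ by absolute continuity and continuity of translations. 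Third, the measurability of $M_hf$ as a section, which you dismiss in half a sentence, does need the small observation that a continuous section is measurable (approximate on a fine partition by the value at the left endpoint of each subinterval and then by elements of $V$, again using \eqref{eq:A2} to keep the error controlled at later times). None of this changes the architecture of your argument.
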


\begin{prop}\label{int-der}
Let $g\in L^1_{loc}((0,T); \{X_t\})$ and $\omega_0\in (0,T)$. Define a function
$
f(t) = \int_{t_0}^t g(s)\, ds
$
for $t\geq t_0$. Then

1) $f\in C(\{t \leq t_0\}\cap (0,T);\{X_t\})$,

2) $f$ is locally absolutely continuous on $\{t \geq t_0\}\cap (0,T)$,

3) $\int_0^T\varphi'(t)f(t)\, dt = \int_0^T\varphi(t)g(t)\, dt$
for all $\varphi\in C_0^{\infty}(\{t\geq t_0\}\cap (0,T))$,

4) $f$ is differentiabile a.e. on $\{t\geq t_0\}\cap (0,T)$ and $\dfrac{df}{dt}(t) = g(t)$.
\end{prop}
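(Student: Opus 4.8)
The plan is to derive all four assertions from three facts already established: the additivity of the local Bochner integral, the norm estimate of Theorem~\ref{theorem:Bochner}, and Lebesgue's differentiation theorem (Proposition~\ref{prop:lebesgue-th}). I would handle the assertions in the order (1)--(2), then (3), then (4); the first three are short, and essentially all the work sits in (4).

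For (1) and (2): whenever $t_0\le t'\le t$, additivity gives $f(t)-f(t')=\int_{t'}^t g(\sigma)\,d\sigma\in X_t$, and Theorem~\ref{theorem:Bochner} yields $\|f(t)-f(t')\|_t\le\int_{t'}^t\|g(\sigma)\|_\sigma\,d\sigma$. Since $g\in L^1_{loc}((0,T);\{X_t\})$ means $\sigma\mapsto\|g(\sigma)\|_\sigma\in L^1_{loc}((0,T))$, absolute continuity of the scalar Lebesgue integral immediately gives (2): for disjoint $[a_i,b_i]\subset[a,b]$,
$$\sum_{i=1}^n\|f(b_i)-f(a_i)\|_{b_i}\le\sum_{i=1}^n\int_{a_i}^{b_i}\|g(\sigma)\|_\sigma\,d\sigma=\int_{\bigcup_i[a_i,b_i]}\|g(\sigma)\|_\sigma\,d\sigma$$
is arbitrarily small once $\sum_i(b_i-a_i)$ is small. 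The same estimate, now on $\{t\le t_0\}$ — where additivity gives $f(t)-f(t')=P(t,t_0)\int_{t'}^t g(\sigma)\,d\sigma\in X_{t_0}$ for $t'\le t\le t_0$, of $\|\cdot\|_{t_0}$-norm at most $\int_{t'}^t\|g(\sigma)\|_\sigma\,d\sigma$, and where $f(t_0)=0$ — yields the continuity claimed in (1).

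For (3): this is Fubini. Fix $\varphi\in C_0^\infty(\{t\ge t_0\}\cap(0,T))$ and a compact interval $[t_0,b]\supset\supp\varphi$, and choose simple functions $s_k$ with $\|g-s_k\|_{L^1([t_0,b];\{X_t\})}\to0$ (Theorem~\ref{theorem:Bochner} and Remark~\ref{rem:3.9}). For each $s_k$ the identity $\int_0^T\varphi'(t)\big(\int_{t_0}^t s_k\,d\sigma\big)\,dt=-\int_0^T\varphi(t)s_k(t)\,dt$ reduces, coordinatewise in the finitely many generators $v_i\in V$, to the scalar fundamental theorem of calculus, using that $\varphi$ vanishes at $b$. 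By Theorem~\ref{theorem:Bochner} one has $\|\int_{t_0}^t(g-s_k)\,d\sigma\|_t\le\|g-s_k\|_{L^1([t_0,b];\{X_t\})}$ uniformly in $t\in[t_0,b]$, so both sides converge in $X_b$ to the corresponding expressions with $g$; this is (3) (up to the sign of integration by parts).

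For (4): let $t$ satisfy the conclusion of Proposition~\ref{prop:lebesgue-th}; a.e.\ $t\in(t_0,T)$ does. For $h<0$ the vector $f(t+h)-f(t)-hg(t)$ lies in $X_t$ and equals $\int_{t+h}^t\big(g(t)-g(\sigma)\big)\,d\sigma$ (with $g(t)-g(\sigma)=g(t)-P(\sigma,t)g(\sigma)\in X_t$), so Theorem~\ref{theorem:Bochner} gives
$$\frac{1}{|h|}\,\|f(t+h)-f(t)-hg(t)\|_t\le\frac{1}{|h|}\int_{t+h}^t\|g(t)-g(\sigma)\|_t\,d\sigma,$$
which tends to $0$ as $h\to0^-$ by \eqref{lebesgue1}. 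For $h>0$ the term $hg(t)\in X_t$ must be transported: in $X_{t+h}$ one has $f(t+h)-f(t)-hg(t)=\int_t^{t+h}\big(g(\sigma)-g(t)\big)\,d\sigma$ (now $g(\sigma)-g(t)=g(\sigma)-P(t,\sigma)g(t)\in X_\sigma$), and Theorem~\ref{theorem:Bochner} bounds $\frac1h\|f(t+h)-f(t)-hg(t)\|_{t+h}$ by $\frac1h\int_t^{t+h}\|g(\sigma)-g(t)\|_\sigma\,d\sigma$. It therefore remains to show $\frac1h\int_t^{t+h}\|g(\sigma)-g(t)\|_\sigma\,d\sigma\to0$ for a.e.\ $t$ — a right-sided version of \eqref{lebesgue1} with the norm taken at the running point. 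I would obtain it by copying the proof of Proposition~\ref{prop:lebesgue-th}: fix a countable sectionwise-dense family of constant sections $x_n\equiv v_n$ with $v_n\in V$ (so the middle term in $\|g(\sigma)-g(t)\|_\sigma\le\|g(\sigma)-x_n(\sigma)\|_\sigma+\|x_n(\sigma)-x_n(t)\|_\sigma+\|x_n(t)-g(t)\|_t$ vanishes identically), apply the scalar Lebesgue differentiation theorem to each $\sigma\mapsto\|g(\sigma)-x_n(\sigma)\|_\sigma$, and let $v_n\to g(t)$ in $X_t$. With both one-sided difference quotients controlled, $f$ is Fréchet differentiable at $t$ with $\frac{df}{dt}(t)=g(t)$, proving (4). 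The delicate step is exactly this right-sided Lebesgue point statement: because $f(t+h)$ and $g(t)$ live in the different spaces $X_{t+h}$ and $X_t$, Proposition~\ref{prop:lebesgue-th} as stated (left-sided, norm at the centre) does not directly apply, and establishing its right-sided companion is the main obstacle.
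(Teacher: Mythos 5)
Your argument is correct, and in parts (1), (2) and (4) it runs along essentially the same lines as the paper (the estimate $\|f(t)-f(t')\|\le\int_{t'}^{t}\|g(\sigma)\|_\sigma\,d\sigma$ plus absolute continuity of the scalar integral; a Lebesgue-point argument for differentiability — the sets $\{t\le t_0\}$ versus $\{t\ge t_0\}$ in the statement are the paper's typo, not an issue with either proof). The genuine divergence is in part (3): the paper never approximates by simple sections there; instead it writes $\varphi'$ as a limit of difference quotients, changes variables in the translated term, and identifies the result as $-\int\varphi\,M_h g$, which converges to $-\int\varphi\,g$ by Proposition \ref{prop:Mh}; you approximate $g$ in $L^1$ near $\supp\varphi$ by simple sections, use the scalar fundamental theorem of calculus for each, and pass to the limit via Theorem \ref{theorem:Bochner} and Remark \ref{rem:3.9}. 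Both are sound; yours is more elementary (no translation/mollification machinery), the paper's reuses Proposition \ref{prop:Mh}, which it needs elsewhere anyway. Your parenthetical remark on the sign is right: the identity should be $\int_0^T\varphi' f\,dt=-\int_0^T\varphi g\,dt$, which is exactly what the paper's own computation produces, so the plus sign in the statement is a misprint. Finally, in part (4) you are in fact more careful than the paper: the paper simply cites Proposition \ref{prop:lebesgue-th} to bound $\frac1h\int_{\omega_1}^{\omega_1+h}\|g(s)-g(\omega_1)\|_{\omega_1+h}\,ds$, although that proposition, as stated, is the left-sided version with the reference point at the upper endpoint; the right-sided variant you isolate (reference at the left end, norm at the running point, which dominates the norm at $t+h$ by monotonicity \eqref{eq:A2}) is what is actually needed, and your proof of it — constant sections $v_n\in V$ supplied by the simple sections witnessing measurability of $g$, together with the two-sided scalar Lebesgue differentiation theorem — is the natural way to justify the step the paper takes for granted. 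So the obstacle you flag is a gap in the paper's citation, not in your proof.
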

\begin{proof}
1) According to definition \ref{def:continuity} of continuity  for any $\omega_1\in \{\omega\leq \omega_0\}\cap (0,T)$, we obtain
\begin{multline*}
\bigg\| \int_{\omega_0}^\omega g(s)\, ds - \int_{\omega_0}^{\omega_1} g(s)\, ds \bigg\|_{\omega_1\vee \omega}
= \bigg\| \int^{\omega\vee\omega_1}_{\omega\wedge\omega_1} g(s)\, ds \bigg\|_{\omega_1\vee \omega}\\
\leq \int^{\omega\vee\omega_1}_{\omega\wedge\omega_1} \|g(s)\|_s\, ds
\to 0 \quad \text{ for } \omega\to\omega_1.
\end{multline*}

2) This assertion is also verified by definition 
$$
\sum_{i=1}^n\|f(b_i)-f(a_i)\|_{b_i} 
= \sum_{i=1}^n \bigg\|\int_{a_i}^{b_i}g(s)\, ds\bigg\|_{b_i}
\leq \sum_{i=1}^n \int_{a_i}^{b_i}\|g(s)\|_{s}\, ds.
$$
Thus, the statement follows from the absolute continuity of the Lebesgue integral.

3) Let $\varphi\in C_0^{\infty}(\{\omega\leq \omega_0\}\cap (0,T))$. 
We choose $h_*>0$ so that $\supp \varphi(\omega + h_*) \subset \{\omega\leq \omega_0\}\cap (0,T)$.
By proposition \ref{prop:Mh}
\begin{multline*}
\int_0^T\varphi'(\omega)f(\omega)\, d\omega
= \int_0^T\lim\limits_{h\to0, h<h_*}\frac{\varphi(\omega+h)-\varphi(\omega)}{h}f(\omega)\, d\omega\\
=\lim\limits_{h\to0, h<h_*}\bigg(\int_0^T \frac{\varphi(\omega+h)}{h}f(\omega)\, d\omega -
\int_0^T \frac{\varphi(\omega)}{h}f(\omega)\, d\omega 
\bigg)\\
=\lim\limits_{h\to0, h<h_*}\bigg(\int_0^T \frac{\varphi(\omega)}{h}f(\omega-h)\, d\omega -
\int_0^T \frac{\varphi(\omega)}{h}f(\omega)\, d\omega 
\bigg)\\
=-\lim\limits_{h\to0, h<h_*}\bigg(\int_0^T \varphi(\omega)\frac{f(\omega-h)-f(\omega)}{h}\, d\omega
\bigg)\\
=-\lim\limits_{h\to0, h<h_*}\bigg(\int_0^T \varphi(\omega)M_hg(\omega)\, d\omega
\bigg)
= -\int_0^T \varphi(\omega)g(\omega)\, d\omega.
\end{multline*}

4) Let us verify differentiability by definition \ref{def:der}. 
Fix $\varepsilon>0$ and $\omega_1\in \{\omega\leq \omega_0\}\cap (0,T)$. 
By Lebesgue theorem \ref{prop:lebesgue-th} there is $\delta>0$ such that, for all $|h|<\delta$,
\begin{equation}
\frac{1}{h}\int_{\omega_1}^{\omega_1+h} \|g(s) - g(\omega_1)\|_{\omega_1+h}\, ds < \varepsilon.
\end{equation}

Then, for $h>0$ (the case $h<0$ can be considered similarly), we obtain
\begin{multline*}
\|f(\omega_1+h) - f(\omega_1) - hg(\omega_1)\|_{\omega_1\vee(\omega_1+h)}
= \bigg\| \int_{\omega_1}^{\omega_1+h} g(s)\, ds - hg(\omega_1) \bigg\|_{\omega_1+h}\\
\bigg\| \int_{\omega_1}^{\omega_1+h} g(s) - g(\omega_1)\, ds \bigg\|_{\omega_1+h}
\leq
|h|\frac{1}{|h|}\int_{\omega_1}^{\omega_1+h} \|g(s) - g(\omega_1)\|_{\omega_1+h}\, ds
\leq |h|\varepsilon.
\end{multline*}

\end{proof}

\section{Sobolev space $W^{1,p}((0,T); \{X_t\})$ via weak derivative}\label{Derivatives}
Here we show that for a Sobolev function from $W^{1,p}((0,T); \{X_t\})$ there exist a weak derivative, which is a section of $\{X_t\}$ and belongs $L^{p}((0,T); \{X_t\})$.
To do that we adapt the classical scheme for Banach valued functions
by using the concept of local Bochner integral, for example, see \cite{CH98}. 

\subsection{Weak derivatives}
\begin{defn}
Let $f\in L^1_{loc}((0,T); \{X_t\})$. 
A function $g\in L^1_{loc}((0,T); \{X_t\})$ is called 
\textit{a weak derivative of $f$} (the usual notation $g = f'$), if for all 
$\varphi\in C_0^{\infty}((0,T))$ the next equality holds
\begin{equation}\label{gen-der}
\bigg\|\int_0^T \varphi'(t)f(t)\, dt + \int_0^T \varphi(t)g(t)\, dt\bigg\|_{t^*}= 0,
\end{equation} 
where $t^* = \sup\{\supp\varphi(t)\}$.
\end{defn}

\begin{prop}\label{prposition:x_0}
Let $f\in L^1_{loc}((0,T); \{X_t\})$ and a weak derivative $f'(t)=0$ a.e. on interval $J\subset (0,T)$.
Then, there exist an element $x_0\in\bigcap\limits_{t\in J}X_t$ such that
$$
\int_0^T \varphi(t)f(t)\, dt = x_0\int_0^T \varphi(t)\, dt
$$
for all $\varphi\in C_0^{\infty}(J)$. In other words, $f(t) = x_0$ a.e. on $J$.
\end{prop}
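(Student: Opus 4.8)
The plan is to run the classical ``vanishing weak derivative $\Rightarrow$ locally constant'' argument inside the family $\{X_t\}$. Passing to its interior we may assume $J$ is a non-degenerate open interval, and we fix once and for all a weight $\psi\in C_0^\infty(J)$ with $\int_0^T\psi(t)\,dt=1$, setting $x_0:=\int_0^T\psi(t)f(t)\,dt$, the local Bochner integral of \eqref{eq:integral}. The key step is a reduction: given $\varphi\in C_0^\infty(J)$, put $c:=\int_0^T\varphi(t)\,dt$; then $\varphi-c\psi\in C_0^\infty(J)$ has zero integral, so its primitive $\eta(t):=\int_{-\infty}^{t}\bigl(\varphi(\tau)-c\,\psi(\tau)\bigr)\,d\tau$ again belongs to $C_0^\infty(J)$ --- it vanishes to the left of $\supp\varphi\cup\supp\psi$, and to the right of it because $\int_0^T(\varphi-c\psi)=0$. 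Substituting $\eta$ into the defining identity \eqref{gen-der} of the weak derivative, and using that $f'(t)=0$ a.e.\ on $J\supset\supp\eta$ (so $\int_0^T\eta(t)f'(t)\,dt=0$ by the Bochner estimate of Theorem \ref{theorem:Bochner}), we obtain $\bigl\|\int_0^T\eta'(t)f(t)\,dt\bigr\|_{t^*}=0$, i.e.\ $\int_0^T\eta'(t)f(t)\,dt=0$ in $X_{t^*}$. By linearity of the local Bochner integral and of the transition operators $P(s,t)$ this rearranges into
\[
\int_0^T\varphi(t)f(t)\,dt=c\,x_0,
\]
the equality being read in $X_{t^*}$ for any $t^*$ above all the supports involved.

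I would then note that $x_0$ does not depend on the choice of $\psi$: two admissible weights differ by a $C_0^\infty(J)$ function of zero integral, which by the computation just performed integrates against $f$ to the zero element. Allowing the support of $\psi$ to range over arbitrarily small subintervals of $J$ and transporting the corresponding integrals by the operators $P(s,t)$ shows that $x_0$ determines one and the same element of $X_t$ for every $t\in J$, so that $x_0\in\bigcap_{t\in J}X_t$; this coordination between the fibres is the step that needs the most care, since one must keep track of which space each integral naturally lives in and check that the linear manipulations survive the transition operators.

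Finally, to upgrade the distributional identity to $f(t)=x_0$ a.e.\ on $J$ I would use the Lebesgue differentiation theorem. Fix $t$ in the interior of $J$ and $h>0$ small enough that $[t-h,t]\subset J$, and take $\varphi_\varepsilon:=\bigl(\tfrac1h\chi_{(t-h,t)}\bigr)*\rho_\varepsilon\in C_0^\infty(J)$, a mollification; then $0\le\varphi_\varepsilon\le\tfrac1h$, $\int_0^T\varphi_\varepsilon=1$, and $\varphi_\varepsilon\to\tfrac1h\chi_{(t-h,t)}$ pointwise a.e. By the Bochner estimate of Theorem \ref{theorem:Bochner} together with dominated convergence, $\int_0^T\varphi_\varepsilon(s)f(s)\,ds\to\tfrac1h\int_{t-h}^{t}f(s)\,ds$ as $\varepsilon\to0$, while the displayed identity gives $\int_0^T\varphi_\varepsilon(s)f(s)\,ds=x_0$ for each $\varepsilon$. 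Hence $M_hf(t)=\tfrac1h\int_{t-h}^{t}f(s)\,ds=x_0$ for all small $h$ and a.e.\ $t$, and letting $h\to0$ via Proposition \ref{prop:lebesgue-th} (equivalently Proposition \ref{prop:Mh}) gives $f(t)=x_0$ for a.e.\ $t\in J$. I expect the first and third paragraphs to be routine adaptations of the scalar and Banach-valued arguments; the genuine obstacle is the inter-fibre bookkeeping in the middle, ensuring that $x_0$ is well defined in every fibre and that the algebra is consistent across the spaces $X_{t^*}$.
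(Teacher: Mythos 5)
Your argument is correct and is essentially the paper's own proof: you fix a weight $\psi\in C_0^{\infty}(J)$ with unit integral, set $x_0=\int\psi f$, and test the weak-derivative identity against the primitive of $\varphi-(\int\varphi)\psi$, exactly as the paper does with its function $\vartheta$ (whose support is allowed to end at an arbitrary $b\in J$, which is the same device you use to place $x_0$ in every $X_t$). Your final mollification/Lebesgue-differentiation paragraph just makes explicit the ``in other words, $f(t)=x_0$ a.e.'' step that the paper leaves implicit, so no substantive difference remains.
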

\begin{proof}
Let $b\in J$. Let us choose $\vartheta\in C_0^{\infty}(J)$ such that
$b = \sup\{\supp\varphi(t)\}$ and $\int_J\vartheta(t)\, dt = 1$.
Assuming that $x_0 = \int_J\vartheta(t)f(t)\, dt\in X_b$.

For an arbitrary function $\varphi \in C_0^{\infty}(J)$ we define a new function 
$$
\psi(t) = \int_{t_0}^t(\varphi(s) -\vartheta(s)\int_0^T\varphi(\sigma)\, d\sigma)\, ds,
$$
where $t_0 = \inf\{\supp\varphi(t)\}$.
Then $\psi \in C_0^{\infty}(J)$ and 
$\psi'(t) = \varphi(t) -\vartheta(t)\int_0^T\varphi(\sigma)\, d\sigma$.
By the hypothesis of the theorem
\begin{multline*}
0 = \int_0^T\psi(t)f'(t)\, dt = \int_0^T\psi'(t)f(t)\, dt\\  
= \int_0^T\varphi(t)f(t)\, dt - \int_0^T\varphi(\sigma)\, d\sigma\cdot\int_0^T\vartheta(t)f(t)\, dt.
\end{multline*}
Therefore $\int_0^T\varphi(t)f(t)\, dt = x_0\int_0^T\varphi(\sigma)\, d\sigma$.
Due to arbitrariness of $b$, we conclude that $x_0\in\bigcap\limits_{t\in J}X_t$.
\end{proof}

\begin{prop}\label{prop:sobolev-int}
Let $1\leq p \leq \infty$ and let $u\in L^{p}((0,T); \{X_t\})$
have a weak derivative $u'\in L^{p}((0,T); \{X_t\})$.
Then the equality
$$
u(t) = u(t_0) + \int_{t_0}^{t}u'(s)\, ds
$$
holds for almost all $t_0,t\in (0,T)$, $t_0\leq t$.
Particularly,
\begin{equation}\label{eq:prop:sobolev-int}
\|u(t) - u(t_0)\|_{t} \leq  \int_{t_0}^{t}\|u'(s)\|_{t}\, ds
\leq  \int_{t_0}^{t}\|u'(s)\|_{s}\, ds.
\end{equation}
\end{prop}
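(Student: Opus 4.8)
The plan is to represent $u$ as a constant plus the local Bochner antiderivative of $u'$, after which both claims follow from the calculus of Section~\ref{section:calculus}. Since $u'\in L^p((0,T);\{X_t\})$ and $L^p$ embeds into $L^1$ on every compact subinterval, $u'\in L^1_{loc}((0,T);\{X_t\})$, so by Theorem~\ref{theorem:Bochner} the integral $\int_J u'\,dt$ is well defined for each compact $J\subset(0,T)$.

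Fix a compact interval $[\alpha,\beta]\subset(0,T)$ and put $v(t)=\int_\alpha^t u'(s)\,ds$ for $t\in[\alpha,\beta]$. By Proposition~\ref{int-der}, $v$ is locally absolutely continuous on $[\alpha,\beta]$ and has weak derivative $u'$ on $(\alpha,\beta)$; by Theorem~\ref{theorem:Bochner} the function $t\mapsto\|v(t)\|_t$ is bounded there, so $v\in L^1_{loc}$ and $w:=u-v\in L^1_{loc}((\alpha,\beta);\{X_t\})$. Using the linearity of the integral and of the defining identity~\eqref{gen-der}, $w$ has weak derivative $w'=0$ a.e. on $(\alpha,\beta)$, so Proposition~\ref{prposition:x_0} provides $x_0\in\bigcap_{t\in(\alpha,\beta)}X_t$ with $u(t)=v(t)+x_0$ for a.e. $t\in(\alpha,\beta)$. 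Subtracting this identity at two points $t_0\le t$ of the full-measure set where it holds and using the additivity recorded after Proposition~\ref{int1},
\[
u(t)-u(t_0)=v(t)-v(t_0)=\int_\alpha^t u'(s)\,ds-\int_\alpha^{t_0}u'(s)\,ds=\int_{t_0}^t u'(s)\,ds .
\]
Exhausting $(0,T)$ by an increasing sequence of such intervals $[\alpha_n,\beta_n]$ and removing the countable union of the associated null sets yields $u(t)=u(t_0)+\int_{t_0}^t u'(s)\,ds$ for a.e. $t_0\le t$ in $(0,T)$; the constants $x_0$ obtained on different intervals never have to be matched, since they cancel in the subtraction above.

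The norm estimate~\eqref{eq:prop:sobolev-int} then follows by applying Theorem~\ref{theorem:Bochner} to $J=[t_0,t]$, for which $t^*=\sup J=t$: this gives $\big\|\int_{t_0}^t u'(s)\,ds\big\|_t\le\int_{t_0}^t\|u'(s)\|_t\,ds$, and the second inequality is immediate from monotonicity~\eqref{eq:A2}, since $s\le t$ on $[t_0,t]$ implies $\|u'(s)\|_t\le\|u'(s)\|_s$. The only point requiring a little attention is that Propositions~\ref{int-der} and~\ref{prposition:x_0} are local and one-sided (formulated for $t\ge t_0$), which is why the antiderivative is built on each compact subinterval with its left endpoint as base point and the global identity is assembled by exhaustion; this is routine but should be stated explicitly.
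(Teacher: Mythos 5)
Your proposal is correct and follows essentially the same route as the paper: subtract the local Bochner antiderivative $\int u'(s)\,ds$ from $u$, show via Proposition~\ref{int-der} that the difference has vanishing weak derivative, invoke Proposition~\ref{prposition:x_0} to conclude it is a.e.\ constant, and then read off the identity and the estimate \eqref{eq:prop:sobolev-int} from Theorem~\ref{theorem:Bochner} and monotonicity \eqref{eq:A2}. Your added care with the compact-interval localization, the exhaustion argument, and the explicit derivation of the norm inequality only makes explicit what the paper leaves implicit.
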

\begin{proof}
Let $t_1\in (0,T)$. Define functions
$f(t) = \int_{t_1}^{t}u'(s)\, ds$
and $r(t) = u(t) - f(t)$ for all $t>t_1$.
By proposition \ref{int-der}
\begin{multline*}
\int_0^T\varphi'(t)r(t)\, dt = \int_0^T\varphi'(t)u(t)\, dt
- \int_0^T\varphi'(t)f(t)\, dt\\ 
= -\int_0^T\varphi(t)u'(t)\, dt + \int_0^T\varphi'(t)u(t)\, dt = 0 
\end{multline*}
for all $\varphi\in C_0^{\infty}(\{t_0\leq t\})$.

Due to proposition \ref{prposition:x_0} there exists $x_0\in X_{t_1}$ such that
$w(t) = x_0$ for almost all $t\geq t_1$.
Hence  $u(t) = u(t_0) + \int_{t_1}^{t}u'(s)\, ds$ 
for almost all $t \geq t_1$. 
Choose such a point $t_0\geq t_1$ which satisfies the equation. 
\end{proof}

\begin{prop}\label{prop:equivW}
Let $u\in L^p((0,T);\{X_t\})$ for some $1\leq p \leq \infty$,
then the following statements are equivalent

(i) There exists a weak derivative $u'\in L^p((0,T);\{X_t\})$.

(ii) Function $u$ locally absolutely continuous on $(0,T)$, 
differentiable a.~e. 
and the derivative $\dfrac{du}{d\omega}\in L^p((0,T);\{X_t\})$.

(iii) There exists a function $v\in L^p((0,T); \{X_t\})$ such that for a.~e. $b\in (0,T)$
and each $x'(b)\in X'_b$
the function $\psi_b(\omega) = \langle x'(b), u(\omega) \rangle_{b}$ is locally absolutely continuous and
$\psi'_b(\omega) = \langle x'(b), v(\omega) \rangle_{b}$ for a. e. $\omega\leq b$.

(iv) There exists a function $v\in L^p((0,T);\{X_t\})$ such that for a.~e. $b\in (0,T)$
and each  $e'(b)\in E'_b\subset X'_b$ ($E'_b$ is a countable dense subset of $X'_b$)
the function $\psi_b(\omega) = \langle e'(b), u(\omega) \rangle_{b}$ is locally absolutely continuous and
$\psi'_b(\omega) = \langle e'(b), v(\omega) \rangle_{b}$ for a.~e. $\omega\leq b$.
\end{prop}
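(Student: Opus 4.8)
I would establish the four equivalences through the cycle (i)$\Rightarrow$(ii)$\Rightarrow$(iii)$\Rightarrow$(iv)$\Rightarrow$(i), noting first that (iii)$\Rightarrow$(iv) is immediate because $E'_b\subset X'_b$. For (i)$\Rightarrow$(ii) I would simply assemble the results of Section~\ref{section:calculus}: Proposition~\ref{prop:sobolev-int} gives $u(t)=u(t_0)+\int_{t_0}^{t}u'(s)\,ds$ for a.e.\ $t_0\le t$, and fixing one (arbitrarily small) admissible base point $t_0$ and applying Proposition~\ref{int-der} to $f(t)=\int_{t_0}^{t}u'(s)\,ds$ shows that $u$ coincides a.e.\ on $(t_0,T)$ with a locally absolutely continuous, a.e.\ differentiable function whose derivative is $u'\in L^p$; letting $t_0\downarrow 0$ yields (ii).

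For (ii)$\Rightarrow$(iii) the plan is to push $u$ forward into the single Banach space $X_b$. For $\omega\le b$ set $U_b(\omega)=P(\omega,b)u(\omega)$ and $V_b(\omega)=P(\omega,b)\frac{du}{d\omega}(\omega)$. Monotonicity~\eqref{eq:A2} makes each $P(\omega,b)$ a norm contraction $X_\omega\to X_b$, which together with the consistency relation $P(\omega_1,b)=P(\omega_2,b)P(\omega_1,\omega_2)$ (clear from the construction of the $P$'s) and~\eqref{eq:addition} turns absolute continuity of $u$ into absolute continuity of $U_b$ in $X_b$, and, reading off Definition~\ref{def:der}, turns a.e.\ Fr\'echet differentiability of $u$ into a.e.\ differentiability of $U_b$ with $\frac{d}{d\omega}U_b=V_b$. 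Composing with any $x'(b)\in X'_b$ yields the real-valued, locally absolutely continuous function $\psi_b(\omega)=\langle x'(b),U_b(\omega)\rangle_b$ with $\psi'_b(\omega)=\langle x'(b),V_b(\omega)\rangle_b=\langle x'(b),v(\omega)\rangle_b$ a.e., which is exactly (iii) (for a.e.\ $b$); no Radon--Nikodym property of $X_b$ is needed, since absolute continuity of $\psi_b$ is inherited directly from that of $u$.

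The substantive implication is (iv)$\Rightarrow$(i). A countable dense $E'_b\subset X'_b$ forces $X'_b$, hence $X_b$, to be separable. Fix $b$ in the full-measure set of (iv). Then $V_b:=P(\cdot,b)v(\cdot)$ is weakly $X_b$-measurable (its pairings with the $e'(b)$ are derivatives of absolutely continuous functions, and density extends measurability to all of $X'_b$), hence strongly measurable by the Pettis theorem; since $\|V_b(\sigma)\|_b\le\|v(\sigma)\|_\sigma\in L^1_{loc}$, the Bochner integral $W(\sigma)=\int_{\omega_0}^{\sigma}V_b(\tau)\,d\tau$ exists and is continuous in $X_b$. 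Pairing $U_b(\sigma)-U_b(\omega_0)-W(\sigma)$ with each $e'(b)\in E'_b$ and invoking the scalar fundamental theorem of calculus for the functions supplied by (iv) gives zero, so by countability of $E'_b$ and its density one obtains $P(\sigma,b)u(\sigma)=P(\omega_0,b)\big(u(\omega_0)+\int_{\omega_0}^{\sigma}v(\tau)\,d\tau\big)$ for a.e.\ $\sigma\le b$, where the last integral is the local Bochner integral of Section~\ref{section:calculus} (legitimate because $P(\sigma,b)$ commutes with that integral, as one checks on simple sections).

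The main obstacle is that this identity holds only for almost every $\sigma$, so one cannot simply substitute $\sigma=b$: $P(\sigma,b)$ need not be injective, and the chosen representative of $u$ need not realise its average value at the endpoint. I would resolve this by taking $b=\omega$ to be simultaneously a point of the good set of (iv) and a \emph{Lebesgue point} of $u$ in the sense of Proposition~\ref{prop:lebesgue-th} (a.e.\ $\omega$ qualifies). Averaging the identity over $\sigma\in(\omega-h,\omega)$ and letting $h\to0$, the left-hand side tends to $u(\omega)$ by Proposition~\ref{prop:lebesgue-th} and the right-hand side to $u(\omega_0)+\int_{\omega_0}^{\omega}v(\tau)\,d\tau$ by continuity of $W$; thus $u(\omega)=u(\omega_0)+\int_{\omega_0}^{\omega}v(\tau)\,d\tau$ for a.e.\ $\omega$ and a.e.\ $\omega_0<\omega$. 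By Fubini one fixes a single admissible $c_0$ so that $u$ equals $w(t):=u(c_0)+\int_{c_0}^{t}v(\tau)\,d\tau$ a.e.\ on $(c_0,T)$; Proposition~\ref{int-der} then identifies $v$ as a weak derivative of $w$, hence of $u$, against every $\varphi$ supported in $(c_0,T)$, and $c_0\downarrow0$ exhausts $C_0^\infty((0,T))$. Since $v\in L^p((0,T);\{X_t\})$ by hypothesis, (i) follows and the cycle is closed.
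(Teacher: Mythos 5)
Your cycle (i)$\Rightarrow$(ii)$\Rightarrow$(iii)$\Rightarrow$(iv)$\Rightarrow$(i) matches the paper's, and the first three implications are essentially the paper's arguments: (i)$\Rightarrow$(ii) from Propositions \ref{prop:sobolev-int} and \ref{int-der}, (ii)$\Rightarrow$(iii) by composing with functionals (the paper does this directly on the difference quotients, which is your pushforward argument written without the explicit $U_b=P(\cdot,b)u$ notation), and (iii)$\Rightarrow$(iv) trivially. The genuine divergence is in (iv)$\Rightarrow$(i). The paper verifies the weak-derivative identity \eqref{gen-der} directly: for $\varphi\in C^\infty_0$ and $b=\sup\supp\varphi$ it pairs $\int\varphi'u$ with each $e'(b)$, swaps pairing and local integral, applies scalar integration by parts for the locally absolutely continuous $\psi_b$, and concludes $\int_0^T\varphi'u\,d\omega=-\int_0^T\varphi v\,d\omega$ in $X_b$ by density of $E'_b$ --- three lines, no measurability or FTC machinery. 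You instead reconstruct $u$ as an antiderivative, $u(\omega)=u(c_0)+\int_{c_0}^{\omega}v$, via Bochner integration in $X_b$, the Lebesgue-point argument of Proposition \ref{prop:lebesgue-th}, and Fubini, and then read off the weak derivative from Proposition \ref{int-der}. Your route is longer but buys something: it re-proves the representation of Proposition \ref{prop:sobolev-int} with $v$ in place of $u'$, and because the weak-derivative identity is then obtained through Proposition \ref{int-der} it lands in $X_{t^*}$ for \emph{every} test function, quietly avoiding the point the paper glosses over, namely that $b=\sup\supp\varphi$ need not lie in the almost-every good set of (iv). Two small trims: the appeal to Pettis' theorem is unnecessary, since $P(\cdot,b)v(\cdot)$ is strongly measurable outright (push the approximating simple sections of $v$ forward by the contractions $P(\sigma,b)$); and in your displayed identity the operator applied to the bracket should be $P(\sigma,b)$ acting on the integral term (the sum $u(\omega_0)+\int_{\omega_0}^{\sigma}v$ lives in $X_\sigma$ by \eqref{eq:addition}), i.e. $P(\sigma,b)u(\sigma)=P(\omega_0,b)u(\omega_0)+P(\sigma,b)\int_{\omega_0}^{\sigma}v(\tau)\,d\tau$ --- a notational slip, not a gap.
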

\begin{proof}
(i) $\Rightarrow$ (ii) 
Thanks to proposition \ref{prop:sobolev-int} $u(\omega) = u(\omega_0) + \int_{\omega_0}^{\omega}u'(s)\, ds$ 
for almost all $\omega, \omega_0\in (0,T)$, $\omega_0\leq\omega$.
Then it follwos from proposition \ref{int-der}
that function $u$ locally absolutely continuous and differentiable a.~e. on $(0,T)$.
As well $\dfrac{du}{d\omega} = u'\in L^p((0,T);\{X_t\})$. 

(ii) $\Rightarrow$ (iii)
Note that
$$
|\langle x'(b), u(\omega_2) \rangle_{b} - \langle x'(b), u(\omega_1) \rangle_{b}|
\leq \|x'(b)\|\cdot\|u(\omega_2) - u(\omega_1) \|
$$
holds for $\omega_1\leq\omega_2\leq b$.
Which implies locally absolutely continuity of function $\psi_b(\omega)$.
Compute the derivative:
\begin{multline*}
\psi'_b(\omega) = \lim\limits_{h\to 0+}\frac{\psi_b(\omega) - \psi_b(\omega-h)}{h}\\
= \lim\limits_{h\to 0+}\langle x'(b), \frac{u(\omega) - u(\omega-h)}{h}\rangle_{b}
= \langle x'(b), \frac{du}{d\omega}(\omega)\rangle_{b}.
\end{multline*}
Take $v=\frac{du}{d\omega} \in L^p((0,T);\{X_t\})$.

(iii) $\Rightarrow$ (iv) clear.

(iv) $\Rightarrow$ (i)
We show that $v=u'$. Let $\varphi\in C_0^{\infty}((0,T))$
and $b=\sup\supp\varphi$. Then for any $e'(b)\in E'_b\subset X'_b$
$$
\langle e'(b), \int_0^T\varphi'u\, d\omega \rangle_{b}
=\int_0^T \varphi'\langle e'(b), u(\omega) \rangle_{b}\, d\omega
= - \int_0^T \varphi\langle e'(b), v(\omega) \rangle_{b}\, d\omega.
$$
Consequently, $\int_0^T\varphi'u\, d\omega = \int_0^T\varphi v\, d\omega$ in $X_b$ as desired.

\end{proof}

We are now ready to formulate and prove our first main result.
\begin{thm}\label{thm:main1}
Let $\{X_t\}$ be a monotone family of reflexive Banach spaces.
If $u\in W^{1,p}((0,T); \{X_t\})$ 
then there exists a section $v(t) \in L^p((0,T); \{X_t\})$ such that $\|v(t)\|_t$ is an upper gradient of $u$ and 
$\|v\|_{L^p((0,T); \{X_t\})} = \inf\limits_{g} \|g\|_{L^{p}((0,T))}$. 
\end{thm}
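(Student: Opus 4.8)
The plan is to exhibit the weak derivative of $u$ explicitly and then deduce the norm identity from the results of Sections~\ref{section:calculus} and~\ref{Derivatives}. Fix a $p$-integrable upper gradient $g$ of $u$ and a locally absolutely continuous representative of $u$ (which exists by a standard argument based on Proposition~\ref{prop:lebesgue-th}), so that \eqref{eq:criteria} holds for all $s\le t$. For each $b\in(0,T)$ consider the $X_b$-valued section $U_b(\omega):=P(\omega,b)u(\omega)$, $\omega\in(0,b]$. Since $P$ is norm non-increasing by \eqref{eq:A2}, we have $\|U_b(\omega_2)-U_b(\omega_1)\|_b\le\|u(\omega_2)-P(\omega_1,\omega_2)u(\omega_1)\|_{\omega_2}\le\int_{\omega_1}^{\omega_2}g$, so $U_b$ is an absolutely continuous curve into the reflexive space $X_b$. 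As reflexive spaces have the Radon--Nikodym property, $U_b$ is Bochner differentiable a.e.; put $V_b:=U_b'\in L^p((0,b);X_b)$, so that $\|V_b(\omega)\|_b\le g(\omega)$ a.e.\ and $U_b(\omega_2)-U_b(\omega_1)=\int_{\omega_1}^{\omega_2}V_b$. From $U_{b_2}=P(b_1,b_2)\circ U_{b_1}$ on $(0,b_1)$ and boundedness of $P(b_1,b_2)$ one gets the compatibility relation $V_{b_2}(\omega)=P(b_1,b_2)V_{b_1}(\omega)$ a.e., for $b_1<b_2$.

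The decisive step is to glue $\{V_b\}$ into a single section $v$ with $v(\omega)\in X_\omega$ and $P(\omega,b)v(\omega)=V_b(\omega)$. For a.e.\ $\omega$ the left difference quotients $w_s(\omega):=\frac1s\big(u(\omega)-P(\omega-s,\omega)u(\omega-s)\big)\in X_\omega$ obey $\|w_s(\omega)\|_\omega\le\frac1s\int_{\omega-s}^\omega g$, which tends to $g(\omega)$ at a Lebesgue point of $g$; by reflexivity of $X_\omega$ the sequence $\{w_{1/n}(\omega)\}$ has a weak limit point $\xi\in X_\omega$ with $\|\xi\|_\omega\le g(\omega)$, and since $P(\omega,b)w_s(\omega)=\frac1s\big(U_b(\omega)-U_b(\omega-s)\big)\to V_b(\omega)$ in $X_b$ whenever $U_b$ is left-differentiable at $\omega$, we obtain $P(\omega,b)\xi=V_b(\omega)$ for every such $b$. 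Now fix a sequence $b_n\uparrow T$ and delete the countably many null sets on which some $U_{b_n}$ fails to be left-differentiable; on each layer $\omega\in(b_k,b_{k+1})$ the set $\{x\in X_\omega:\ P(\omega,b_{k+1})x=V_{b_{k+1}}(\omega),\ \|x\|_\omega\le g(\omega)\}$ is non-empty, closed, bounded and convex, so a measurable selection (layer by layer, then glued) yields a measurable section $v$. The compatibility relations give $P(\omega,b)v(\omega)=V_b(\omega)$ a.e.\ in $\omega$ for every $b>\omega$, and $\|v(\omega)\|_\omega\le g(\omega)$ shows $v\in L^p((0,T);\{X_t\})$.

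To identify $v$ with the weak derivative $u'$: for $\varphi\in C_0^\infty((0,T))$, $b=\sup\supp\varphi$ and $x'\in X'_b$, the real function $\omega\mapsto\langle x',U_b(\omega)\rangle_b$ is absolutely continuous with a.e.\ derivative $\langle x',V_b(\omega)\rangle_b=\langle x',P(\omega,b)v(\omega)\rangle_b$, so integration by parts yields $\big\langle x',\int_0^T\varphi'u\,d\omega+\int_0^T\varphi v\,d\omega\big\rangle_b=0$; as $x'$ is arbitrary, $\int_0^T\varphi'u\,d\omega+\int_0^T\varphi v\,d\omega=0$ in $X_b$, i.e.\ $v=u'$ (this is exactly Proposition~\ref{prop:equivW}(iv)$\Rightarrow$(i)). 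Proposition~\ref{prop:sobolev-int} now gives $\|u(t)-P(s,t)u(s)\|_t\le\int_s^t\|v(\tau)\|_\tau\,d\tau$ for a.e.\ $s\le t$, so $t\mapsto\|v(t)\|_t$ is a $p$-integrable upper gradient of $u$ and hence $\inf_g\|g\|_{L^p((0,T))}\le\|v\|_{L^p((0,T);\{X_t\})}$. For the converse, the weak derivative is unique (if $\int_0^T\varphi(v_1-v_2)\,d\omega=0$ in the relevant $X_b$ for all $\varphi$, then $v_1=v_2$ a.e.\ by Proposition~\ref{prop:lebesgue-th}), so running the construction above with an arbitrary $p$-integrable upper gradient $g$ produces this same $v$; thus $\|v(\omega)\|_\omega\le g(\omega)$ a.e.\ for every upper gradient $g$, whence $\|v\|_{L^p((0,T);\{X_t\})}\le\inf_g\|g\|_{L^p((0,T))}$. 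The two inequalities give the stated equality.

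The main difficulty is the gluing step. Reflexivity of the individual $X_b$ delivers the ``vertical'' derivatives $V_b$ without trouble, but the transition operators $P(\omega,b)$ are in general neither injective nor surjective, so recovering a genuine section $v$ with $v(\omega)\in X_\omega$ requires reflexivity of $X_\omega$ itself (to place a norm-controlled preimage of $V_b(\omega)$ inside $X_\omega$) together with a measurable-selection argument; the impossibility of letting $b\downarrow\omega$ in a measurably coherent way is precisely what forces the layered construction along an exhausting sequence $b_n\uparrow T$, and checking that the resulting selection is a measurable section in the sense of the direct-integral framework is the one point that needs genuine care.
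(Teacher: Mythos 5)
Your route is genuinely different from the paper's (the paper differentiates $u$ directly via the quotients $u_h(t)=\frac{u(t)-u(t-h)}{h}\in X_t$, identifies their sectional weak limit through countable dense families of functionals and Proposition \ref{prop:equivW}(iv), and gets membership in $L^p$ from Proposition \ref{prop:weak-lp}), and the first half of your argument is sound: $U_b(\omega)=P(\omega,b)u(\omega)$ is absolutely continuous into the reflexive space $X_b$, hence differentiable a.e.\ with $\|V_b(\omega)\|_b\le g(\omega)$, and $V_{b_2}=P(b_1,b_2)V_{b_1}$ a.e. The proof breaks down at the gluing step, in two distinct places. First, the layered selection anchored at $b_{k+1}$ only guarantees $P(\omega,b_{k+1})v(\omega)=V_{b_{k+1}}(\omega)$, hence $P(\omega,b)v(\omega)=V_b(\omega)$ for $b\ge b_{k+1}$; for $\omega<b<b_{k+1}$ you only know $P(b,b_{k+1})\bigl(P(\omega,b)v(\omega)-V_b(\omega)\bigr)=0$, and since the transition operators can have nontrivial kernel (the semi-norms may drop, as in the example closing Section \ref{Scalar}) you cannot conclude. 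But compatibility at a.e.\ $b$ -- in particular at $b$ lying in the same layer as $\omega$ -- is exactly what you need both to verify Proposition \ref{prop:equivW}(iv) (the relation $\psi'_b(\omega)=\langle x'(b),P(\omega,b)v(\omega)\rangle_b$ must hold for a.e.\ $b$, not just for the anchors $b_n$) and to obtain the upper-gradient inequality $\|u(t)-P(s,t)u(s)\|_t\le\int_s^t\|v(\tau)\|_\tau\,d\tau$ for a.e.\ $s\le t$. So your claim that ``the compatibility relations give $P(\omega,b)v(\omega)=V_b(\omega)$ for every $b>\omega$'' is unjustified, and with it the identification $v=u'$ and the upper-gradient property.

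Second, even on a fixed layer the measurability of the selection is only asserted. Measurable-selection theorems concern multifunctions into a fixed standard Borel/Polish space, whereas here the fibers $X_\omega$ vary and measurability means approximability a.e.\ by simple sections built from $V$; you acknowledge this (``the one point that needs genuine care'') but supply no argument. The pointwise alternative you sketch -- taking $v(\omega)$ to be a weak subsequential limit of $w_s(\omega)=\frac1s(u(\omega)-P(\omega-s,\omega)u(\omega-s))$ -- does restore compatibility for a.e.\ pairs $(\omega,b)$, but the subsequence then depends on $\omega$ and measurability of $\omega\mapsto v(\omega)$ is again left open. This is precisely the difficulty the paper's proof is designed to avoid: its difference quotients are measurable sections by construction, their weak limit is canonical (independent of the sequence $h_k$) because it is pinned down a.e.\ by the scalar derivatives $\psi'_{n,b}$ of $\psi_{n,b}(t)=\langle x'_n(b),u(t)\rangle_b$, and Proposition \ref{prop:weak-lp} then yields $w\in L^p((0,T);\{X_t\})$. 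To rescue your approach you would have to prove a selection/measurability lemma adapted to the direct-integral setting, or replace the layered anchors by the canonical weak limit and prove its measurability -- either of which is the substantive content currently missing.
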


\begin{proof} 
1. Prove that we can choose a continuous representer $u$.
Let \eqref{eq:criteria} fails on a set $\Sigma\subset (0,T)$ of null measure. 
Observe that thanks to inequality \eqref{eq:criteria} for any convergence sequence 
$\{s_n\}\subset (0,T)\setminus\Sigma$ have 
\begin{equation}\label{eq:criteria:1}
\|u(s_n) - u(s_m) \|_{s_n\vee s_m} \to 0 \quad\text{ when } n,m\to\infty.
\end{equation}
For each point $t \in\Sigma$ choose 
an increasing sequence $\{s_n(t)\}\subset (0,T)\setminus\Sigma$  
such that $\lim\limits_{n\to\infty}s_n(t) = t$.
Due to \eqref{eq:criteria:1}  
$P(s_n(t),t)u(s_n(t))$ is a Cauchy sequence in $X_{t}$, 
and hence there is a limit $\lim\limits_{n\to\infty} P(s_n(t),t)u(s_n(t))\in X_{t}$.
Note that for any other increasing sequence $\{\zeta_n\}\subset (0,T)\setminus\Sigma$
converging to $t$ it is true that $\lim\limits_{n\to\infty}u(\zeta_n)= \lim\limits_{n\to\infty}u(s_n(t))$.
The function 
$$
\bar u(t) :=
\begin{cases}
u(t), & \text{ if } \omega \in (0,T)\setminus\Sigma,\\
\lim\limits_{n\to\infty} P(s_n(t),t)u(s_n(t)), & \text{ if } t \in \Sigma
\end{cases}
$$ 
coincides with $u(t)$ a.~e. on $(0,T)$. 

Now prove that \eqref{eq:criteria} holds for the function $\bar u(t)$
everywhere on $(0,T)$. 
Suppose that $t, t_0 \in (0,T)$ and $t_0 < t$. 
Then $s_n(t_0)<s_n(t)$ for large $n$ and 
\begin{multline*}
\|\bar u(t) - \bar u(t_0)\|_t
\leq
\|\bar u(t) - \bar u(s_n(t))\|_t
+\|\bar u(t_0) - \bar u(s_n(t_0))\|_t\\
+\|\bar u(s(t)) - \bar u(s_n(t_0))\|_t\\
\leq\|\bar u(t) - \bar u(s_n(t))\|_t
+\|\bar u(t_0) - \bar u(s_n(t_0))\|_t\\
+ \int^{s_n(t)}_{s_n(t_0)} g(s)\, ds
\to \int^{t}_{t_0} g(s)\, ds,
\quad \text{ as } n\to\infty.
\end{multline*}
This implies as well the continuity of the function $\bar u(t)$ on $(0,T)$.
Thus we can assume that the function $u(t)$ is continuous and
the inequality \eqref{eq:criteria} is valid everywhere on the interval $(0,T)$.
Due to the continuity, $u((0,T))\cap X_{t_1} = u(\{t\leq t_1\}\cap (0,T))$
is a separable space for any $t_1\in (0,T)$. 
From now we will deal with $\tilde X_{t_1} = u(\{t\leq t_1\}\cap (0,T))$. 
Note that thanks to the reflexivity of $X_{t_1}$,  $\tilde X'_{t_1}$ is separable. 

2. Here we will show that the family of difference quotients is bounded in $L^p$ and uniformly.

For $h>0$ define the function 
$u_h(t) = \frac{u(t) - u(t-h)}{h} \in X_{t}$.
Let $J\subset (0,T)$ and $\operatorname{dist}(J,\{0,T\})>h$. 
Applying \eqref{eq:criteria} and H{\"o}lder's inequality, derive
\begin{equation*}
\|u_h(t)\|^p_{t} =
\frac{1}{h^p}\|u(t) - u(t-h)\|^p_t\\
\leq \frac{1}{h^p}\bigg(\int^{t}_{t - h} g(s)\, ds \bigg)^p
\leq \frac{1}{h}\int^{t}_{t - h} |g(s)|^p\, ds
\end{equation*}
for all $t\in J$.
Next, with the help of Fubini's theorem obtain
 $\|u_h\|_{L^p(J;\{X_t\})} \leq \|g\|_{L^p((0,T))}$,
which means that $\{u_h\}$ is a bounded family in  $L^p(J; \{X_t\})$.

Let $F\subset (0,T)$ be such a set of null measure that
$$
g(t) = \lim\limits_{h\to 0}\frac{1}{h}\int^{t}_{t - h} g(s)\, ds, \quad \text {for all } \omega\in (0,T)\setminus F.
$$
Then the inequality 
$\|u_h(t)\|_t \leq \frac{1}{h}\int^{t}_{t - h} g(s)\, ds$ 
guarantees the uniform estimate
$\|u_h(t)\|_t \leq K_t$ for all $t\in (0,T)\setminus F$
and small $h$.

3. To apply proposition \ref{prop:equivW} (i $\Leftrightarrow$ iv), we will show that sequence $u_h(t)$ has a weak limit in $L^p$, which is the desired derivative. 

Fix $b\in (0,T)$.
Let $\{x'_n(b)\}_{n\in\mathbb N}$ be a dense sequence  in $\tilde X'_b$ 
(it is possible due to step 1).
For $x'_n(b)$ and $t\leq b$ define the function 
$\psi_{n,b}(t) = \langle x'_n(b), u(t) \rangle_{b}$.
Note that
$$
|\psi_{n,b}(t) - \psi_{n,b}(t_0)| \leq \|x'_n(b)\|\int^{t}_{t_0} g(s)\, ds.
$$ 
Therefore the function $\psi_{n,b}(t)$ is locally absolutely continuous on $t\leq b$. 

Since $\tilde X_b$ is reflexive there exists a sequence $h_k\to 0$ and an element
$w(t)\in X_t$ such that
$u_{h_k}(t)\rightharpoonup w(t)$.
Particularly, 
$$
\langle x'_n(b), w(t) \rangle_{b} = \lim\limits_{k\to\infty}\langle x'_n(b), u_{h_k}(t) \rangle_{b}
= \psi'_{n,b}(t)
\quad \text{ for all } t \in (0,T)\setminus F,
$$ 
namely $\psi'_{n,b}(t) = \langle x'_n(b), w(t) \rangle_{b}$ a.~e. on $t\leq b$.

It only remains to verify that $w\in L^p((0,T);\{X_t\})$.
For any sequence $h_m\to 0$ have
$$
\lim\limits_{m\to\infty}\langle x'_n(b), u_{h_m}(t) \rangle_{b} = \psi'_{n,b}(t) = \langle x'_n(b), w(t) \rangle_{b}.
$$
Let $x'(b)\in X'_b$ and $\varepsilon>0$. 
Choose $x'_n(b)$ such that $\|x'(b)-x'_n(b)\|\leq\varepsilon$.
For small $h>0$ infer
\begin{multline*}
|\langle x'(b), u_h(b) - w(b) \rangle_{b}|
\leq
|\langle x'(b) - x'_n(b), u_h(b) - w(b) \rangle_{b}|\\
+ |\langle x'_n(b), u_h(b) - w(b) \rangle_{b}|
\leq
\varepsilon(K_b + \|w(b)\|_b) + \varepsilon.
\end{multline*}
Consequently, $u_{h}(b)\rightharpoonup w(b)$,
and, as number $b$ was chosen arbitrarily,
$u_{h}(\omega)\rightharpoonup w(t)$ for all $t\in (0,T)$.
By proposition \ref{prop:weak-lp} $w\in L^p((0,T); \{X_t\})$.
And by proposition \ref{prop:equivW} there exist a weak derivative $u' = w$.

4. From step 2 $\|\|u'(t)\|_t\|_{L^p((0,T); \mathbb R)} = \|u'\|_{L^p((0,T);\{X_t\})} \leq \|g\|_{L^p((0,T); \mathbb R)}$
for any function $g$ which satisfy \eqref{eq:criteria}.
On the other hand by inequality \eqref{eq:prop:sobolev-int} we have 
$$
\|u(t) - u(t_0)\|_{t} \leq  \int_{t_0}^{t}\|u'(s)\|_{s}\, ds.
$$
Thus $\|u'\|_{L^p((0,T);\{X_t\})} = \inf\limits_{g} \|g\|_{L^{p}((0,T);\mathbb R)}$.
\end{proof}

From the above proof we obtain 
$$
\|u\|_{W^{1,p}((0,T);\{X_t\})} = \|u\|_{L^{p}((0,T);\{X_t\})} + \|u'\|_{L^{p}((0,T);\{X_t\})}.  
$$
Therefore, we may define the Sobolev space $W^{1,p}((0,T);\{X_t\})$ 
as a set of all functions in $L^{p}((0,T);\{X_t\})$ 
with weak derivatives which are also in $L^{p}((0,T);\{X_t\})$.
Now in the usual manner one can proof the following
\begin{thm}\label{theorem:W-Banach}
The space $W^{1,p}((0,T); \{X_t\})$ is a Banach space for all $1\leq p <\infty$.
In the case of Hilbert spaces $X_t$, $W^{1,2}((0,T); \{X_t\})$ is also a Hilbert space.
\end{thm}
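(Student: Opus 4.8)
The plan is to run the classical Riesz--Fischer completeness argument, carried out with $p$-integrable upper gradients in place of derivatives, and then to deduce the Hilbert assertion from Theorem \ref{thm:main1}. The only input beyond linear algebra will be that $L^p((0,T);\{X_t\})$ is complete (Proposition \ref{lp-banach}) and that the transition operators are well behaved.

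First I would record two elementary facts. (i) Upper gradients are additive under subtraction: if $g,h$ are $p$-integrable upper gradients of $f_1,f_2$, then, since each $P(s,t)$ is linear, $(f_1-f_2)(t)-P(s,t)(f_1-f_2)(s)=\big[f_1(t)-P(s,t)f_1(s)\big]-\big[f_2(t)-P(s,t)f_2(s)\big]$, so $g+h$ is an upper gradient of $f_1-f_2$; in particular $W^{1,p}((0,T);\{X_t\})$ is a vector space and $\|f_1-f_2\|_{W^{1,p}}\le\|f_1-f_2\|_{L^p}+\|g\|_{L^p}+\|h\|_{L^p}$. (ii) Monotonicity \eqref{eq:A2} forces $\|P(s,t)x\|_t\le\|x\|_s$, so each $P(s,t)$ is $1$-Lipschitz from $X_s$ to $X_t$; this is what will permit passing to limits inside \eqref{eq:criteria}.

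Given a Cauchy sequence $\{u_n\}$ in $W^{1,p}((0,T);\{X_t\})$, it is Cauchy in $L^p((0,T);\{X_t\})$ since $\|\cdot\|_{L^p}\le\|\cdot\|_{W^{1,p}}$, hence convergent there to some $u$. Passing to a subsequence with $\sum_n\|u_{n+1}-u_n\|_{W^{1,p}}<\infty$ and (Riesz--Fischer) with $u_n(t)\to u(t)$ in $X_t$ for a.e.\ $t$, I pick for each $n$ an upper gradient $g_n$ of $u_{n+1}-u_n$ with $\|g_n\|_{L^p((0,T))}\le\|u_{n+1}-u_n\|_{W^{1,p}}+2^{-n}$, and set $g=\sum_n g_n$ and $\hat g_m=\sum_{n\ge m}g_n$, both in $L^p((0,T))$ with $\|\hat g_m\|_{L^p}\to 0$. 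Fixing an upper gradient $\tilde g$ of $u_1$, the telescoping identity $u_N=u_1+\sum_{n<N}(u_{n+1}-u_n)$ gives $\|u_N(t)-P(s,t)u_N(s)\|_t\le\int_s^t(\tilde g+g)\,d\tau$ for a.e.\ $s\le t$; letting $N\to\infty$ — using fact (ii) together with $u_N(t)\to u(t)$, $u_N(s)\to u(s)$ — yields $\|u(t)-P(s,t)u(s)\|_t\le\int_s^t(\tilde g+g)\,d\tau$, so $u\in W^{1,p}((0,T);\{X_t\})$. The same telescoping applied to $u_N-u_m$ shows $\hat g_m$ is an upper gradient of $u-u_m$, whence $\|u-u_m\|_{W^{1,p}}\le\|u-u_m\|_{L^p}+\|\hat g_m\|_{L^p}\to 0$; since a Cauchy sequence with a convergent subsequence converges, completeness follows. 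I expect the only delicate point to be this final limit passage: one must check that for a.e.\ pair $s\le t$ the inequality \eqref{eq:criteria} holds simultaneously for all $u_n$, and that $u_N(t)-P(s,t)u_N(s)\to u(t)-P(s,t)u(s)$ in $X_t$, which is exactly where fact (ii) and the choice of subsequence enter. (When the $X_t$ are reflexive, one could instead invoke the weak-derivative description after Theorem \ref{thm:main1} and argue analogously, but the upper-gradient argument needs no such hypothesis.)

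For the Hilbert case, a Hilbert $X_t$ is reflexive, so Theorem \ref{thm:main1} attaches to each $u\in W^{1,2}((0,T);\{X_t\})$ a weak derivative $u'\in L^2((0,T);\{X_t\})$ with $\|u\|_{W^{1,2}}=\|u\|_{L^2}+\|u'\|_{L^2}$; moreover $u'$ is unique, since two weak derivatives differ by a function with vanishing weak derivative, hence by a constant in $\bigcap_t X_t$ by Proposition \ref{prposition:x_0}, and that constant is $0$ because it lies in $L^2$. Endowing the direct integral $L^2((0,T);\{X_t\})$ with the inner product $(f,h)\mapsto\int_0^T\langle f(t),h(t)\rangle_t\,dt$ makes it a Hilbert space, so $u\mapsto(u,u')$ is a linear isometry of $W^{1,2}((0,T);\{X_t\})$, equipped with the equivalent norm $\big(\|u\|_{L^2}^2+\|u'\|_{L^2}^2\big)^{1/2}$, onto a subspace of $L^2((0,T);\{X_t\})\oplus L^2((0,T);\{X_t\})$; this subspace is closed because $W^{1,2}$ is complete by the first part, and a closed subspace of a Hilbert space is again a Hilbert space.
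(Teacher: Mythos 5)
Your completeness argument is sound and in fact more self-contained than what the paper records: the paper gives no written proof, but places the theorem right after the re-definition of $W^{1,p}((0,T);\{X_t\})$ through weak derivatives, so the intended ``usual manner'' is the classical one — a Cauchy sequence gives $u_n\to u$ and $u_n'\to v$ in $L^p((0,T);\{X_t\})$ (Proposition \ref{lp-banach}), and one passes to the limit in \eqref{gen-der} to see $v=u'$. That route leans on Theorem \ref{thm:main1}, hence tacitly on reflexivity of the $X_t$, to identify the upper-gradient space with the weak-derivative space. You instead run the Riesz--Fischer/Newtonian argument directly on Definition \ref{defn:W1}, using only linearity of $P(s,t)$, the contraction property $\|P(s,t)x\|_t\le\|x\|_s$ coming from \eqref{eq:A2}, completeness of the $L^p$-direct integral, and a subsequence converging sectionally a.e.; the points you flag (a common null set of pairs $s\le t$ for the countably many inequalities \eqref{eq:criteria}, and $u_N(t)-P(s,t)u_N(s)\to u(t)-P(s,t)u(s)$ in $X_t$) are handled exactly as you indicate. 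What this buys is the Banach statement without any reflexivity hypothesis; the paper's route buys a quicker proof once Theorem \ref{thm:main1} is in hand. Your treatment of the Hilbert case (graph embedding $u\mapsto(u,u')$ into $L^2\oplus L^2$, closed by completeness) is the standard one and matches the paper's intent.

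One step needs repair: in proving uniqueness of $u'$ you dismiss the constant $x_0$ from Proposition \ref{prposition:x_0} ``because it lies in $L^2$''. If $(0,T)$ is bounded, every constant section is square-integrable, so this gives nothing. The fix is immediate: if $v_1,v_2$ are weak derivatives of the same $u$, then $\int_0^T\varphi(t)\,(v_1-v_2)(t)\,dt=0$ in $X_{t^*}$ for every $\varphi\in C_0^{\infty}((0,T))$; combined with Proposition \ref{prposition:x_0} this reads $x_0\int_0^T\varphi\,dt=0$, and choosing $\varphi$ with nonzero integral and $t^*=\sup\supp\varphi$ arbitrarily small (then using monotonicity \eqref{eq:A2}) forces $\|x_0\|_t=0$ for all $t$, i.e. $v_1=v_2$ a.e. With this correction the map $u\mapsto(u,u')$ is well defined and your Hilbert-space conclusion stands.
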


\section{Isomorphism between  $W^{1,p}((0,T);\{X_t\})$ and $W^{1,p}((0,T); Y)$.}\label{Isomorphism}
Here we establish requirements to the regularity of the family $\{X_t\}$ which allows us to construct the isomorphism $W^{1,p}((0,T); \{X_t\})$ onto a standard Sobolev space. 
To demonstrate these conditions, we give positive and negative examples.

\subsection{Embeddings between edges}
Let 
$$
\|v\|_T = \lim_{t\to T-}\|v\|_t \quad\text{ and }\quad \|v\|_0 = \lim_{t\to 0+}\|v\|_t,
$$
for $v\in V$. Define $(X_T, \|\cdot\|_T)$ and $(X_0, \|\cdot\|_0)$ as completions
of corresponding  quotient spaces.
Though, in critical cases, the first one could contain only zero, while the last one would be empty.
Then we have the following trivial embeddings:
\begin{prop}\label{prop:trvemb}
The maps 
$$
u(t)\mapsto P(0,t)u(t) \quad \text{ from } W^{1,p}((0,T); X_0) \text{ to } W^{1,p}((0,T); \{X_t\})
$$
$$
u(t)\mapsto P(t,T)u(t) \quad \text{ from } W^{1,p}((0,T); \{X_t\}) \text{ to } W^{1,p}((0,T); X_T)
$$
are both bounded operators.
\end{prop}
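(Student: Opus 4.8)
The plan is to treat the two maps in parallel: for each, check that it sends a section of the source type to a section of the target type, that it is bounded on the $L^p$-part, and that it carries an upper gradient of the source function to an upper gradient of the image. Two structural facts do all the work. First, every transition operator is a contraction, $\|P(s,t)x\|_{t}\le\|x\|_{s}$ for $s\le t$: this is immediate from the construction of $P(s,t)$ via Cauchy sequences in $V$ together with \eqref{eq:A2}, and the edge operators $P(0,t)\colon X_0\to X_t$ and $P(t,T)\colon X_t\to X_T$ are defined in exactly the same way and are contractions as well. Second, the operators obey the cocycle identity $P(s,t)\circ P(r,s)=P(r,t)$ for $r\le s\le t$ (including the endpoints $r=0$, $t=T$): both sides agree on $V$ by inspection, hence on all of $X_r$ by density of $V$ and continuity.

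For the first map, take $u\in W^{1,p}((0,T);X_0)$ and set $v(t)=P(0,t)u(t)\in X_t$. To see that $v$ is a measurable section of $\{X_t\}$, approximate $u$ a.e. in $X_0$ by simple functions with values in $V$ (possible since $V$ is dense in $X_0$, via a diagonal argument over the standard $X_0$-valued approximants); if $\tilde u_j$ denotes the corresponding $V$-valued simple \emph{section}, then $\tilde u_j(t)=P(0,t)u_j(t)$, so $\|\tilde u_j(t)-v(t)\|_t\le\|u_j(t)-u(t)\|_0\to0$ a.e., which is the definition of measurability. From $\|v(t)\|_t\le\|u(t)\|_0$ we get $\|v\|_{L^p((0,T);\{X_t\})}\le\|u\|_{L^p((0,T);X_0)}$. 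If $g\in L^p((0,T))$ is an upper gradient of $u$, then for $s\le t$ the cocycle identity gives $P(s,t)v(s)=P(s,t)P(0,s)u(s)=P(0,t)u(s)$, hence $v(t)-P(s,t)v(s)=P(0,t)\bigl(u(t)-u(s)\bigr)$ and
\[
\|v(t)-P(s,t)v(s)\|_t\le\|u(t)-u(s)\|_0\le\int_s^t g(\tau)\,d\tau ,
\]
so $g$ is an upper gradient of $v$ in the sense of \eqref{eq:criteria}. Taking the infimum over $g$ yields $\|v\|_{W^{1,p}((0,T);\{X_t\})}\le\|u\|_{W^{1,p}((0,T);X_0)}$.

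For the second map, take $u\in W^{1,p}((0,T);\{X_t\})$ and set $w(t)=P(t,T)u(t)\in X_T$. A $V$-valued simple section $\sum_k\chi_{A_k}v_k$ is sent by $t\mapsto P(t,T)(\cdot)$ to the $X_T$-valued simple function with the constant values $[v_k]_T\in X_T$, and $\|P(t,T)u_k(t)-P(t,T)u(t)\|_T\le\|u_k(t)-u(t)\|_t$ shows $w$ is strongly measurable into $X_T$; also $\|w(t)\|_T\le\|u(t)\|_t$ gives $\|w\|_{L^p((0,T);X_T)}\le\|u\|_{L^p((0,T);\{X_t\})}$. If $g$ is an upper gradient of $u$, then for $s\le t$ the cocycle identity gives $w(s)=P(t,T)P(s,t)u(s)$, whence $w(t)-w(s)=P(t,T)\bigl(u(t)-P(s,t)u(s)\bigr)$ and
\[
\|w(t)-w(s)\|_T\le\|u(t)-P(s,t)u(s)\|_t\le\int_s^t g(\tau)\,d\tau ,
\]
so $g$ is an upper gradient of $w$ and $\|w\|_{W^{1,p}((0,T);X_T)}\le\|u\|_{W^{1,p}((0,T);\{X_t\})}$. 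In both cases the operator norm is in fact bounded by $1$.

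The estimates themselves are routine; the only point needing a little care is the measurability of the images in the appropriate space — for the first map one must pass through approximation by $V$-valued simple sections (not arbitrary $X_0$-valued simple functions), and for both maps one uses that $P$ is compatible with this approximation because it fixes elements of $V$. The degenerate edge situations pointed out before Proposition~\ref{prop:trvemb} cause no difficulty: either the statement is read on the subspace of $V$ on which the relevant edge norm is finite, or $X_T=\{0\}$ and the second map is simply the zero operator.
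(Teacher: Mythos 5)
Your proof is correct and follows essentially the same route as the paper: both rest on the contraction property $\|P(s,t)x\|_t\le\|x\|_s$ and the cocycle identity $P(s,t)P(r,s)=P(r,t)$, giving exactly the paper's estimates $\|P(0,t)u(t)-P(s,t)P(0,s)u(s)\|_t\le\|u(t)-u(s)\|_0$ and $\|P(t,T)u(t)-P(s,T)u(s)\|_T\le\|u(t)-P(s,t)u(s)\|_t$. Your explicit check of measurability of the image sections and your remark on the degenerate edge spaces are welcome additions that the paper leaves implicit, but they do not change the argument.
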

\begin{proof}
Let $u \in W^{1,p}((0,T); X_0)$.
Then 
$\|P(0,t)u(t)\|_{t}
\leq \|u(t)\|_{0}
$
and thus $P(0,t) u(t) \in L^{p}((0,T); \{X_t\})$.
By assumption, there is the derivative $u'(t) \in L^p((0,T); X_0)$.
Then 
\begin{multline*}
\|P(0,t) u(t) - P(s,t)P(0,s)u(s)\|_{t} 
\leq \|P(0,t) u(t) - P(0,t)u(s)\|_{t}\\
\leq \|u(t) - u(s)\|_{0}
\leq \int_s^t\|u'(\tau)\|_{0}\, d\tau.  
\end{multline*}

Now let $u\in W^{1,p}((0,T); \{X_t\})$.
Then $\|P(t,T) u(t)\|_{T} \leq \|u(t)\|_{t}$ for all $t\in(0,T)$.
So  $P(t,T) u(t) \in L^{p}((0,T); X_T)$.
Check that there is a derivative in $L^p$
\begin{multline*}
\|P(t,T) u(t) - P(s,T) u(s)\|_{T} = \|P(t,T) u(t) - P(t,T)P(s,t) u(s)\|_{T}\\
\leq \|u(t) - P(s,t) u(s)\|_{T}\\
\leq \|u(t) - P(s,t) u(s)\|_{t}
\leq \int_s^tg(\tau)\, d\tau.  
\end{multline*}
Consequently, $P(t,T) u \in W^{1,p}((0,T); X_0)$.

\end{proof}

\subsection{Isomorphism with a standard Sobolev space}
 
Suppose there is another Banach space $Y$ and a family of isomorphisms 
$\Phi_t:X_t\to Y$ such that 
$$
\|\Phi_tu\|_Y \leq C(t)\|u\|_{X_t} \quad \text{ for all } u\in X_t.
$$
and
$$
\|\Phi^{-1}_tv\|_{X_t} \leq c(t)\|v\|_Y \quad \text{ for all } v\in Y.
$$

\begin{prop}
If 
$\|\Phi_t\|_{\mathcal B(X_t;Y)}$ and $\|\Phi^{-1}_t\|_{\mathcal B(Y;X_t)}$ are in $L^{\infty}((0,T))$
then operator $(\Phi u)(t) = \Phi_tu(t)$ is an isomorphism
from $L^p((0,T);\{X_t\})$ to $L^p((0,T); Y)$.
\end{prop}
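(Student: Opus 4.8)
The plan is to produce the inverse operator explicitly and to reduce the whole statement to the pointwise bounds $\|\Phi_t\|_{\mathcal B(X_t;Y)}\le C_\infty$ and $\|\Phi_t^{-1}\|_{\mathcal B(Y;X_t)}\le c_\infty$, valid for a.e.\ $t$, where $C_\infty$ and $c_\infty$ denote the $L^\infty((0,T))$-norms of the two operator-norm functions appearing in the hypothesis. Linearity of $\Phi$, $(\Phi u)(t)=\Phi_tu(t)$, is immediate: for sections evaluated at one and the same parameter the addition \eqref{eq:addition} reduces to the usual addition in $X_t$, and each $\Phi_t$ is linear. So two things remain to be checked: (a) $\Phi$ maps $L^p((0,T);\{X_t\})$ boundedly into $L^p((0,T);Y)$, and the analogous statement holds for the candidate inverse $(\Psi v)(t):=\Phi_t^{-1}v(t)$; and (b) $\Psi$ is a genuine two-sided inverse of $\Phi$.

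For (a), take $u\in L^p((0,T);\{X_t\})$ and simple sections $f_k(t)=\sum_j\chi_{A_j^k}(t)\,v_j^k$ with $v_j^k\in V$ and $\|f_k(t)-u(t)\|_t\to0$ a.e. Since $t\mapsto\Phi_tv$ is a measurable $Y$-valued function for every $v\in V$ (this is precisely the statement that the field $(\Phi_t)_t$ is compatible with the measurable structure of $\{X_t\}$, and it is evident in all the examples of Section~\ref{Sobolev-ineq}), each $\Phi_tf_k(t)$ is strongly measurable, and $\|\Phi_tf_k(t)-\Phi_tu(t)\|_Y\le\|\Phi_t\|_{\mathcal B(X_t;Y)}\|f_k(t)-u(t)\|_t\to0$ a.e.; hence $t\mapsto\Phi_tu(t)$ is strongly measurable. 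The norm bound is then the $L^\infty$-multiplier estimate: for $1\le p<\infty$,
\[
\int_0^T\|\Phi_tu(t)\|_Y^p\,dt\le\int_0^T\|\Phi_t\|_{\mathcal B(X_t;Y)}^p\,\|u(t)\|_t^p\,dt\le C_\infty^p\,\|u\|_{L^p((0,T);\{X_t\})}^p,
\]
with the parallel $\esssup$ estimate for $p=\infty$; thus $\Phi$ is bounded with $\|\Phi\|\le C_\infty$. Repeating the argument with $\Phi_t^{-1}$ in place of $\Phi_t$ shows that $\Psi$ is a bounded operator $L^p((0,T);Y)\to L^p((0,T);\{X_t\})$ with $\|\Psi\|\le c_\infty$.

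For (b), the identities $\Phi_t^{-1}\Phi_t=\mathrm{id}_{X_t}$ and $\Phi_t\Phi_t^{-1}=\mathrm{id}_Y$, valid for every $t$, give $(\Psi\Phi u)(t)=u(t)$ and $(\Phi\Psi v)(t)=v(t)$ for a.e.\ $t$, i.e.\ $\Psi\Phi=\mathrm{id}$ and $\Phi\Psi=\mathrm{id}$ on the respective $L^p$-spaces; hence $\Phi$ is a bounded linear bijection with bounded inverse $\Phi^{-1}=\Psi$, which is the assertion. The one point that is not purely formal is the measurability claim inside (a): for $\Phi$ one needs $t\mapsto\Phi_tv$ measurable for $v\in V$, and for $\Psi$ one needs that $t\mapsto\Phi_t^{-1}y$ is a measurable section of $\{X_t\}$ (in the sense built on the structuring subspace $V$) for every $y\in Y$. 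Both are consequences of requiring $(\Phi_t)_t$ to be a measurable field of isomorphisms, and once that is granted the remainder is exactly the routine computation displayed above --- I expect this measurability bookkeeping, rather than any analytic difficulty, to be the main thing to get right.
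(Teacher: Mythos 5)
The paper states this proposition without proof, and your argument is precisely the routine one it leaves to the reader: the a.e.\ bounds $\|\Phi_t\|_{\mathcal B(X_t;Y)}\leq C_\infty$, $\|\Phi_t^{-1}\|_{\mathcal B(Y;X_t)}\leq c_\infty$ give the two $L^p$-estimates, and the pointwise identities $\Phi_t^{-1}\Phi_t=\mathrm{id}_{X_t}$, $\Phi_t\Phi_t^{-1}=\mathrm{id}_Y$ make $(\Psi v)(t)=\Phi_t^{-1}v(t)$ a two-sided bounded inverse, so the proof is correct. Your flag about measurability is well placed: the statement implicitly presupposes that $(\Phi_t)_t$ is a measurable field of isomorphisms (so that $t\mapsto\Phi_t v$ is a measurable $Y$-valued map for $v\in V$ and $t\mapsto\Phi_t^{-1}y$ a measurable section for $y\in Y$), and with that granted your approximation by simple sections transfers measurability in both directions exactly as you describe.
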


We are going to make use of the so-called difference quotient criterion.
This property is well known in the real-valued case \cite[Proposition 9.3]{Brezis11}.
In our settings, we should additionally assume that Banach spaces $X_t$ are isomorphic to some space $Y$ which possesses 
the Radon--Nikodym property. 
A Banach space $Y$ has the Radon--Nikodym property if each Lipschitz continuous function $f:I\to Y$ is differentiable almost everywhere
(a good account on this property see in \cite{ABHN11}).

\begin{prop}\label{prop:crt2}
If there is an isomorphism from $L^p((0,T);\{X_t\})$ to $L^p((0,T); Y)$ and 
$Y$ has the Radon--Nikodym property, then, for $1<p\leq\infty$, a function $u\in L^p((0,T);\{X_t\})$
is in $W^{1,p}((0,T);\{X_t\})$ if and only if there exists a constant $C$ such that 
for all $J\subset\subset(0,T)$ and $h>0$, $\operatorname{dist}(J, \{0,T\}) < h$
$$
\|\tau_h u - u\|_{L^p(J; \{X_{t\vee t+h}\})} \leq Ch.
$$
\end{prop}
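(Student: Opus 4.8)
\emph{The forward implication} should be routine and uses neither the isomorphism nor the Radon--Nikodym property. Starting from a $p$-integrable upper gradient $g$ of $u$ (Definition~\ref{defn:W1}), for a.e.\ $t$ and admissible $h>0$ inequality \eqref{eq:criteria} gives $\|(\tau_hu-u)(t)\|_{t\vee t+h}=\|u(t+h)-P(t,t+h)u(t)\|_{t+h}\le\int_t^{t+h}g(\tau)\,d\tau$; then Jensen's inequality (to get $(\int_t^{t+h}g(\tau)\,d\tau)^p\le h^{p-1}\int_t^{t+h}g(\tau)^p\,d\tau$) together with Fubini produces $\|\tau_hu-u\|_{L^p(J;\{X_{t\vee t+h}\})}\le h\|g\|_{L^p((0,T))}$, so $C=\inf_g\|g\|_{L^p((0,T))}$ works; for $p=\infty$ one replaces the integrals by essential suprema.

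\emph{For the converse} I would produce a $p$-integrable upper gradient of $u$ by transporting the problem, one fibre at a time, into a genuine Banach-space-valued Sobolev space. Fix $t^*\in(0,T)$ and consider $\hat u^{t^*}(t):=P(t,t^*)u(t)\in X_{t^*}$ for $t\in(0,t^*)$: this is an honest $X_{t^*}$-valued function. Since the family is monotone each $P(\cdot,t^*)$ is norm-nonexpansive, so $\hat u^{t^*}(t)-\hat u^{t^*}(s)=P(t,t^*)\big(u(t)-P(s,t)u(s)\big)$ and hence the assumed difference-quotient bound for $u$ descends verbatim to $\hat u^{t^*}$: $\|\tau_h\hat u^{t^*}-\hat u^{t^*}\|_{L^p(J;X_{t^*})}\le Ch$ for $J\subset\subset(0,t^*)$. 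Because $X_{t^*}$ is isomorphic to $Y$ it has the Radon--Nikodym property, so the $Y$-valued analogue of the difference-quotient criterion \cite[Proposition~9.3]{Brezis11} (valid precisely for spaces with this property, cf.\ \cite{ABHN11}; for $p=\infty$ it is merely the statement that a Lipschitz curve into an RNP space is a.e.\ differentiable and is the integral of its derivative) yields $\hat u^{t^*}\in W^{1,p}((0,t^*);X_{t^*})$ with $\|(\hat u^{t^*})'\|_{L^p(J;X_{t^*})}\le C$ and the Newton--Leibniz formula $\hat u^{t^*}(t)-\hat u^{t^*}(s)=\int_s^t(\hat u^{t^*})'(\sigma)\,d\sigma$ for a.e.\ $s<t$.

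\emph{It remains to assemble these fibrewise derivatives into a scalar upper gradient of $u$.} Over a countable dense set $\{t_k\}\subset(0,T)$ the functions $(\hat u^{t_k})'$ are compatible, $(\hat u^{t_k})'(\sigma)=P(t_j,t_k)(\hat u^{t_j})'(\sigma)$ for $t_j<t_k$ and a.e.\ $\sigma<t_j$, so $\sigma\mapsto g(\sigma):=\sup_{t_k>\sigma}\|(\hat u^{t_k})'(\sigma)\|_{t_k}$ is measurable. Writing $(\hat u^{t_k})'(\sigma)=\lim_{h\to0}P(\sigma+h,t_k)\,\tfrac1h\big(u(\sigma+h)-P(\sigma,\sigma+h)u(\sigma)\big)$ and using nonexpansiveness once more gives $g(\sigma)\le\liminf_{h\to0}\tfrac1h\|(\tau_hu-u)(\sigma)\|_{\sigma\vee\sigma+h}$, whence Fatou's lemma and the hypothesis give $\|g\|_{L^p((0,T))}\le C$. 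Finally, applying the previous paragraph with $t^*=t$ and identifying, by a Lebesgue-point argument (Proposition~\ref{prop:lebesgue-th}), the value $u(t)$ with the left limit of the absolutely continuous representative of $\hat u^t$, one obtains for a.e.\ $s<t$ that $\|u(t)-P(s,t)u(s)\|_t\le\int_s^t\|(\hat u^t)'(\sigma)\|_t\,d\sigma\le\int_s^tg(\sigma)\,d\sigma$, the last step using $\|(\hat u^t)'(\sigma)\|_t\le g(\sigma)$ a.e.\ (compatibility together with density of $\{t_k\}$). Hence $g$ is a $p$-integrable upper gradient of $u$ and $u\in W^{1,p}((0,T);\{X_t\})$.

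\emph{The hard part} is the assembly step: the derivatives $(\hat u^t)'$ live in different fibres $X_t$ and communicate only through the nonexpansive maps $P(t_1,t_2)$, and one must squeeze out of this a single $g\in L^p((0,T))$ with the correct norm bound; this forces the interplay of fibrewise compatibility, the monotonicity-driven nonexpansiveness, and a Fatou-type lower semicontinuity against the hypothesis, all while keeping $g$ measurable (hence the reduction to a countable set of $t$'s). Subordinate but still delicate are the transfer of the Radon--Nikodym property from $Y$ to each fibre $X_t$ — used crucially in the second paragraph and available here because every $X_t$ is isomorphic to $Y$ — and the justification of the endpoint passage $\sigma\to t-$ via Proposition~\ref{prop:lebesgue-th} rather than an a priori continuity assumption on $u$.
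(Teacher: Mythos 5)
Your proof is essentially sound, but it is a genuinely different route from the paper's: the paper gives no written argument for Proposition \ref{prop:crt2} at all, only the remark that it follows by a ``straightforward adaptation'' of the proof of \cite[Theorem 2.2]{AK2018} to the direct-integral setting. You instead reduce the problem fibrewise: freezing $t^*$ and passing to $\hat u^{t^*}=P(\cdot,t^*)u$, you exploit the monotonicity-driven nonexpansiveness of the transition operators to transport the difference-quotient bound into the single Banach space $X_{t^*}$, invoke the classical Radon--Nikodym-property criterion there, and then assemble the fibrewise derivatives into one scalar upper gradient $g$ via compatibility $(\hat u^{t_k})'=P(t_j,t_k)(\hat u^{t_j})'$, a countable dense set of $t$'s for measurability, and Fatou against the hypothesis; the endpoint identification $u(t)=\lim_{s\to t-}\hat u^t(s)$ via Proposition \ref{prop:lebesgue-th} then yields \eqref{eq:criteria}. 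What your route buys: it is self-contained modulo the classical Bochner-space criterion, it delivers the upper gradient of Definition \ref{defn:W1} explicitly with the sharp bound $\|g\|_{L^p}\le C$, and it makes visible that all that is really used is the Radon--Nikodym property of the individual fibres $X_t$ (note the proposition's literal hypothesis, an isomorphism between the two $L^p$ spaces, does not by itself give fibrewise isomorphisms; you are tacitly using the family $\Phi_t:X_t\to Y$ introduced just before the statement, which is clearly the intended reading). The paper's citation route is shorter but leaves the whole adaptation to the reader. Two points to tighten in your write-up: (i) run the Fatou step along a fixed sequence $h_n\to0$ and compare with the absolutely continuous representative of $\hat u^{t_k}$, so that the pointwise bound $\|(\hat u^{t_k})'(\sigma)\|_{t_k}\le\liminf_n h_n^{-1}\|(\tau_{h_n}u-u)(\sigma)\|_{\sigma\vee(\sigma+h_n)}$ holds for a.e.\ $\sigma$ --- a liminf over all real $h$ can be corrupted by the null set where $u$ differs from that representative; (ii) record that $\hat u^{t^*}$ is strongly measurable as an $X_{t^*}$-valued map (images of simple sections under $P(\cdot,t^*)$ are again simple), since the classical criterion requires this.
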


The proof of this proposition is a fairly straightforward adaptation of the proof of \cite[Theorem 2.2]{AK2018}.

We want to know when the operator $\Phi:W^{1,p}((0,T); \{X(t)\})\to W^{1,p}((0,T); Y)$
defined by the rule $(\Phi u)(t) = \Phi_tu(t)$
is an isomorphism. Necessary and sufficient conditions are given in the following
\begin{thm}\label{theorem:main2}
Let $1<p\leq\infty$, $N(t,v)=\|v\|_t$ be a continuous function for any $v\in V$, and $Y$ has the Radon--Nikodym property.
Then a family of homeomorphisms $\Phi_t:X_t\to Y$ induces 
an isomorphism $\Phi:W^{1,p}((0,T); \{X_t\})\to W^{1,p}((0,T); Y)$
defined by the rule $(\Phi u)(t) = \Phi_tu(t)$
if and only if $\|\Phi_t\|_{\mathcal B(X_t;Y)}, \|\Phi^{-1}_t\|_{\mathcal B(Y;X_t)} \in L^{\infty}((0,T))$ and 
\begin{equation}\label{theorem:main2:eq1}
\|\Phi_tP(s,t) - \Phi_s\|_{\mathcal B(X_s; Y)} \leq M(t-s)
\end{equation}
\begin{equation}\label{theorem:main2:eq2}
\|\Phi^{-1}_t - P(s,t)\Phi^{-1}_s\|_{\mathcal B(Y; X_t)} \leq M(t-s),
\end{equation}
for almost all $s<t$.
\end{thm}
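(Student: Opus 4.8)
\section*{Proof strategy for Theorem~\ref{theorem:main2}}

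The plan is to deduce both implications from two facts already at hand: the difference‑quotient characterization of the Sobolev class (Proposition~\ref{prop:crt2}, together with its classical analogue on the target, valid because $Y$ has the Radon--Nikodym property), and the integral representation $u(t)=P(s,t)u(s)+\int_s^t u'(\tau)\,d\tau$ of Proposition~\ref{prop:sobolev-int}. The algebraic core is the pair of identities
\[
\Phi_{t+h}u(t+h)-\Phi_t u(t)=\Phi_{t+h}\bigl(u(t+h)-P(t,t+h)u(t)\bigr)+\bigl(\Phi_{t+h}P(t,t+h)-\Phi_t\bigr)u(t),
\]
\[
\Phi_{t+h}^{-1}v(t+h)-P(t,t+h)\Phi_t^{-1}v(t)=\Phi_{t+h}^{-1}\bigl(v(t+h)-v(t)\bigr)+\bigl(\Phi_{t+h}^{-1}-P(t,t+h)\Phi_t^{-1}\bigr)v(t),
\]
which split a difference quotient of $\Phi u$ (resp.\ of $\Phi^{-1}v$) into a \emph{transport} term, controlled by the difference quotient of $u$ (resp.\ of $v$) and the $L^\infty$ bound on $\|\Phi_\cdot\|$ (resp.\ on $\|\Phi_\cdot^{-1}\|$), and a \emph{connection} term, controlled \emph{precisely} by the commutators in \eqref{theorem:main2:eq1}--\eqref{theorem:main2:eq2} with $s=t$, $t\rightsquigarrow t+h$.

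For the ``if'' part, assume the four conditions. Since $\|\Phi_\cdot\|,\|\Phi_\cdot^{-1}\|\in L^\infty$, the preceding $L^p$‑isomorphism result turns $(\Phi u)(t)=\Phi_tu(t)$ into an isomorphism $L^p((0,T);\{X_t\})\to L^p((0,T);Y)$, so Proposition~\ref{prop:crt2} applies on the $\{X_t\}$‑side and its classical version on the $Y$‑side. Given $u\in W^{1,p}((0,T);\{X_t\})$ with upper gradient $g$, apply the first identity for $t\in J\subset\subset(0,T)$ and small $h>0$: by \eqref{eq:criteria} and \eqref{theorem:main2:eq1} the $Y$‑norm of the right‑hand side is at most $\|\Phi_\cdot\|_\infty\int_t^{t+h}g(\tau)\,d\tau+Mh\,\|u(t)\|_t$; taking the $L^p(J,dt)$‑norm and using Young's convolution inequality gives $\|\tau_h(\Phi u)-\Phi u\|_{L^p(J;Y)}\le\bigl(\|\Phi_\cdot\|_\infty\|g\|_{L^p}+M\|u\|_{L^p}\bigr)h$, uniformly in $J$ and $h$; hence $\Phi u\in W^{1,p}((0,T);Y)$ with $\|\Phi u\|_{W^{1,p}}\lesssim\|u\|_{W^{1,p}}$. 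Running the symmetric computation with the second identity, \eqref{theorem:main2:eq2} and $\|\Phi_\cdot^{-1}\|\in L^\infty$ shows $\Phi^{-1}$ maps $W^{1,p}((0,T);Y)$ boundedly into $W^{1,p}((0,T);\{X_t\})$; as $\Phi,\Phi^{-1}$ are already mutually inverse $L^p$‑isomorphisms, $\Phi$ is the asserted isomorphism of Sobolev spaces.

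For the ``only if'' part, assume $\Phi$ is an isomorphism of the Sobolev spaces. First I would extract the $L^\infty$ bounds. Fix $[\alpha,\beta]\subset\subset(0,T)$ and $\eta\in C_0^\infty((0,T))$ with $\eta\equiv1$ on $[\alpha,\beta]$. For $y\in Y$ the section $\eta\,\mathbf y$ ($\mathbf y$ the constant $Y$‑valued section) lies in $W^{1,p}((0,T);Y)$ with norm $\le C_\eta\|y\|_Y$, so $\Phi^{-1}(\eta\,\mathbf y)\in W^{1,p}((0,T);\{X_t\})$ and equals $t\mapsto\Phi_t^{-1}y$ on $[\alpha,\beta]$; the embedding $W^{1,p}\hookrightarrow C$ on a slightly smaller interval (a consequence of Propositions~\ref{prop:equivW} and \ref{prop:sobolev-int}) yields $\|\Phi_t^{-1}\|_{\mathcal B(Y;X_t)}\le C$ there, and exhausting $(0,T)$ gives $\|\Phi_\cdot^{-1}\|\in L^\infty$; the dual bound $\|\Phi_\cdot\|\in L^\infty$ is obtained similarly from constant $V$‑valued sections and boundedness of $\Phi$. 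With both $L^\infty$ bounds, $\Phi$ is an $L^p$‑isomorphism and Proposition~\ref{prop:crt2} is available. To recover \eqref{theorem:main2:eq1} I would run the first identity in reverse: for $u\in W^{1,p}((0,T);\{X_t\})$ the image $\Phi u\in W^{1,p}((0,T);Y)$ obeys the difference‑quotient bound, and subtracting the now‑controlled transport term leaves $\|(\Phi_{t+h}P(t,t+h)-\Phi_t)u(t)\|_{L^p(J,dt;Y)}\le Ch\|u\|_{W^{1,p}}$; specializing $u$ to cutoffs of constant $V$‑sections and then passing to the supremum over $v\in V$ converts this into the operator inequality \eqref{theorem:main2:eq1}. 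The symmetric computation with $\Phi^{-1}$ and constant $Y$‑sections yields \eqref{theorem:main2:eq2}.

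The hard part is this final passage, in the ``only if'' direction, from integrated estimates to the pointwise operator inequalities \eqref{theorem:main2:eq1}--\eqref{theorem:main2:eq2}, and the companion uniform bound $\|\Phi_\cdot\|\in L^\infty$: since the norms $\|\cdot\|_t$ are only monotone in $t$, any admissible test section localized near a fixed time $t_0$ inflates $\|u\|_{W^{1,p}}$ by a factor carrying $\|v\|_{\alpha}$ with $\alpha<t_0$ rather than $\|v\|_{t_0}$, so one must invoke the continuity hypothesis on $N(t,v)$ to let $\|v\|_{\alpha}\to\|v\|_{t_0}$ and combine it with a Lebesgue‑point/measurable‑selection argument over a countable dense family in $V$ (resp.\ in $Y$) in order to survive the limit. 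I expect this to be the technically most delicate step; the remainder is bookkeeping on Propositions~\ref{prop:crt2}, \ref{prop:sobolev-int} and \ref{prop:equivW}.
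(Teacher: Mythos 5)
Your ``if'' half is essentially the paper's own argument: the same splitting of $\Phi_{t+h}u(t+h)-\Phi_t u(t)$ into a transport term controlled by the upper gradient and a connection term controlled by \eqref{theorem:main2:eq1}, followed by the difference-quotient criterion (Proposition~\ref{prop:crt2} on the $\{X_t\}$ side, its classical analogue on the $Y$ side), so that direction is fine.

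The ``only if'' half, however, has a genuine gap exactly at the step you flag as delicate, and the difficulty is more serious than the $\|v\|_{\alpha}$ versus $\|v\|_{t_0}$ mismatch you describe. Your plan is to take the integrated estimate $\|(\Phi_{t+h}P(t,t+h)-\Phi_t)u(t)\|_{L^p(J,dt;Y)}\leq Ch\|u\|_{W^{1,p}}$, specialize $u$ to cutoffs of constant $V$-sections, and ``pass to the supremum over $v$'' to obtain \eqref{theorem:main2:eq1}. But an $L^p$-in-$t$ bound of order $h$ does not yield a pointwise-in-$t$ bound of order $h$ (no amount of Lebesgue-point or countable-dense-family bookkeeping fixes this: for fixed $h$ the function $t\mapsto\|(\Phi_{t+h}P(t,t+h)-\Phi_t)v\|_Y$ could be large on a small set while keeping its $L^p$ norm $\leq Ch$), and even if it did, you would only control pairs $(t,t+h)$ with $h$ small inside a fixed $J$, whereas \eqref{theorem:main2:eq1}--\eqref{theorem:main2:eq2} are needed for almost all $s<t$ with arbitrary separation, which would additionally require a chaining argument you do not mention.

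The paper's route avoids all of this by working with derivatives rather than difference quotients in the necessity direction: test the boundedness of $\Phi$ on $u_r(t)=\eta(\frac{t-z}{r})v$, use the product rule $(\Phi u_r)'(t)=\frac1r\eta'(\frac{t-z}{r})\Phi_t v+\eta(\frac{t-z}{r})(\Phi_t v)'$ together with the Lebesgue differentiation theorem to obtain the a.e.\ pointwise bounds $\|\Phi_z v\|_Y\leq C_1\|v\|_{X_z}$ and $\|(\Phi_z v)'\|_Y\leq C_1\|v\|_{X_z}$ (this is also where the $L^\infty$ bounds come from, with the correct norm $\|v\|_{X_z}$ on the right-hand side, so your separate $W^{1,p}\hookrightarrow C$ argument and the limiting procedure in $\alpha$ are not needed). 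Since $\Phi(\eta v)\in W^{1,p}((0,T);Y)$ and $Y$ has the Radon--Nikodym property, the fundamental theorem of calculus then gives, for every pair $s<t$ in the region where $\eta\equiv1$,
\begin{equation*}
\|(\Phi_tP(s,t)-\Phi_s)v\|_Y=\|\Phi_t v-\Phi_s v\|_Y\leq\int_s^t\|(\Phi_\tau v)'\|_Y\,d\tau\leq C_1\int_s^t\|v\|_{X_\tau}\,d\tau\leq C_1\|v\|_{X_s}(t-s),
\end{equation*}
where the last step uses monotonicity of the norms; the same computation for $\Phi^{-1}$ gives \eqref{theorem:main2:eq2}. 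You should replace your final passage by this derivative-based argument (or supply an equivalent absolute-continuity argument for $t\mapsto\Phi_t v$); as written, the decisive step of your necessity proof is only a sketch of a strategy that, in the form stated, does not go through.
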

\begin{proof} 
Necessity.
First we show that $\|\Phi_t\|_{\mathcal B(X_t;Y)}$ are uniformly bounded. 
Fix a cut-off function $\eta\in C^{\infty}_0(\mathbb R)$ equal to $1$ on $B(0, 1)$ 
and $0$ outside the ball $B(0, 2)$. 
By substituting the functions $u_r(t) = \eta(\frac{t-z}{r})v$, 
where $v\in V$ and $B(z,2r)\subset (0,T)$,
into the inequality $\|\Phi u_r\|_{L^p(Y)}\leq K\|u_r\|_{W^{1,p}((0,T);\{X_t\})}$, derive
$$
\int_{|z-t|<r}\|\Phi_t v\|_Y^p\, dt \leq CK \int_{|z-t|<2r}\|v\|_{X_t}^p\, dt.
$$
Applying the Lebesgue differentiation theorem, we infer  
\begin{equation}\label{main2:eq1}
\|\Phi_z v\|_Y\leq C_1\|v\|_{X_z}
\end{equation} 
a.e. on $(0,T)$ for all $v\in V$.

Now note that 
$(\Phi u_r)'(t) = \frac{1}{r}\eta'(\frac{t-z}{r})\Phi_t v + \eta(\frac{t-z}{r})(\Phi_t v)'$.
In the same manner we obtain
\begin{equation}\label{main2:eq2}
\|(\Phi_z v)'\|_Y\leq C_1\|v\|_{X_z}
\end{equation} 
a.e. on $(0,T)$ for all $v\in V$.

Next, from  \eqref{main2:eq2} and the fact that $\Phi\eta v \in W^{1,p}(Y)$, for $v\in V$ and 
$\eta\in C^{\infty}_0(0,T)$, we have
\begin{multline*}
\|(\Phi_tP(s,t) - \Phi_s) v\|_{Y} = \|\Phi_tv - \Phi_s v\|_{Y} 
\leq \int_{s}^{t}\|(\Phi_\tau v)'\|_Y\, d\tau \\
\leq C_1\int_{s}^{t}\|v\|_{X_\tau}\, d\tau 
\leq C_1\|v\|_{X_s} (t-s).
\end{multline*}
For operators $\Phi^{-1}_t$ we repeat the same steps and yield boundedness and estimate \eqref{theorem:main2:eq2}. 

Sufficiency.
Let $u\in W^{1,p}((0,T); \{X(t)\})$ then
\begin{multline*}
\|\Phi_{s+h}u(s+h) - \Phi_su(s)\|_Y \\
\leq\|\Phi_{s+h}(u(s+h) - P(s,s+h)u(s))\|_Y + \|\Phi_{s+h}P(s,s+h)u(s)-\Phi_su(s)\|_Y\\
\leq C(s+h)\|u(s+h) - P(s,s+h)u(s)\|_{X_{s+h}} + \|\Phi_{s+h}P(s,s+h)-\Phi_s\|\cdot\|u(s)\|_{X_s}\\
\leq C(s+h)\int_s^{s+h}g(\tau)\, d\tau + Mh\|u(s)\|_{X_s}.
\end{multline*}
Calculating $L^p$-norm obtain
\begin{multline*}
\|\Phi_{\cdot+h}u(\cdot+h) - \Phi_{\cdot}u(\cdot)\|_{L^p(J; Y)} \\
\leq \bigg( \esssup\limits_{(0,T)}\|\Phi_t\|_{\mathcal B(X_t,Y)}\|g\|_{L^p((0,T))} 
+  M\|u\|_{L^p((0,T), \{X_t\})}\bigg) h.
\end{multline*} 
Due to proposition \ref{prop:crt2} or \cite[Theorem 2.2]{AK2018} $\Phi u\in W^{1,p}((0,T);Y)$. 

In the same manner for operator $(\Phi^{-1} u)(t) = \Phi^{-1}_tu(t)$ we have inequality 
\begin{multline*}
\|\Phi^{-1}_{\cdot+h}u(\cdot+h) - \Phi^{-1}_{\cdot}u(\cdot)\|_{L^p(J; \{X_{t\vee t+h}\})} \\
\leq \bigg( \esssup\limits_{(0,T)}\|\Phi^{-1}_t\|_{\mathcal B(Y,X_t)}\|\|u'\|_{Y}\|_{L^p((0,T))} 
+  M\|u\|_{L^p((0,T), Y)}\bigg) h,
\end{multline*} 
for any $u\in W^{1,p}((0,T); Y)$.
By proposition \ref{prop:crt2} $\Phi^{-1} u\in W^{1,p}((0,T);\{X_t\})$.
\end{proof}

\subsection{Examples}

Let us apply the last theorem to the two examples from section \ref{Sobolev-ineq}.

\begin{example}[Composition operator]

First we examine example \ref{ex-compop}. Since all spaces $X_t$ from this example consists of the same functions as space $W^{1,q}(\Omega_0)$, we want to check if the constructed space $W^{1,p}((0,T); \{X_t\})$ is isomorphic to space $W^{1,p}((0,T); W^{1,q}(\Omega_0))$ under operator $(\Phi u)(t) = w(t)u(t)$ (here $w: (0,T) \to \mathbb{R}$ is some weight function, which we use just for the demonstration of theorem \ref{theorem:main2}).
So we choose operators of multiplication by constant $w(t) I : W^{1,q}(\Omega_t) \to W^{1,q}(\Omega_0)$ as $\Phi_t: X_t \to Y$, $Y = W^{1,q}(\Omega_0)$. 
Then, applying \ref{theorem:main2}, we obtain the conditions: $w$ is Lipschitz function bounded from $0$.

\end{example}

\begin{example}[Monotone family of Hilbert spaces]
As in example \ref{ex-Hilbert}, let $\{Q_t\}$ is a non-decreasing family in $\mathbb R^n$,
and $W^{1,2}((0,T); \{H^1_0(Q_t)\})$ is a Sobolev space in a non-cylindrical domain. 
We want to ask the following question: if is it possible to construct isomorphism $\Phi u (t) = C_{\varphi(t,\cdot)}u(t)$
from this space to a Sobolev space in a cylindrical domain with the help of composition operators between inner spaces
$C_{\varphi(t,\cdot)} : H^1_0(Q_t)\to H^1_0(Q_T)$?
To answer this question we consider a mapping $\varphi(t,x):(0,T)\times Q_T \to \bigcup t\times Q_t$, with the property that for every $t\in(0,T)$ 
mapping $\varphi(t,\cdot):Q_T\to Q_t$ is a quasi-isometry.
Then, for each $t$, operator $C_{\varphi(t,\cdot)}$ is an isomorphism \cite[Theorem 4]{V2005}.

Unfortunately, theorem \ref{theorem:main2} gives us a negative answer to the above question.
The essential obstacle is that assumption \eqref{theorem:main2:eq1} in the case of composition operators is equivalent to the demand on additional derivatives.
To demonstrate this we consider a simple example.
Let $Q_t$ be line segments $[0, 1/2 + t/2]$, where $t\in(0,1)$,
and $\varphi(t,x)=\frac{(1+t)x}{2}$.

\begin{figure}[h!]
\centering	
\includegraphics[width=0.8\linewidth]{./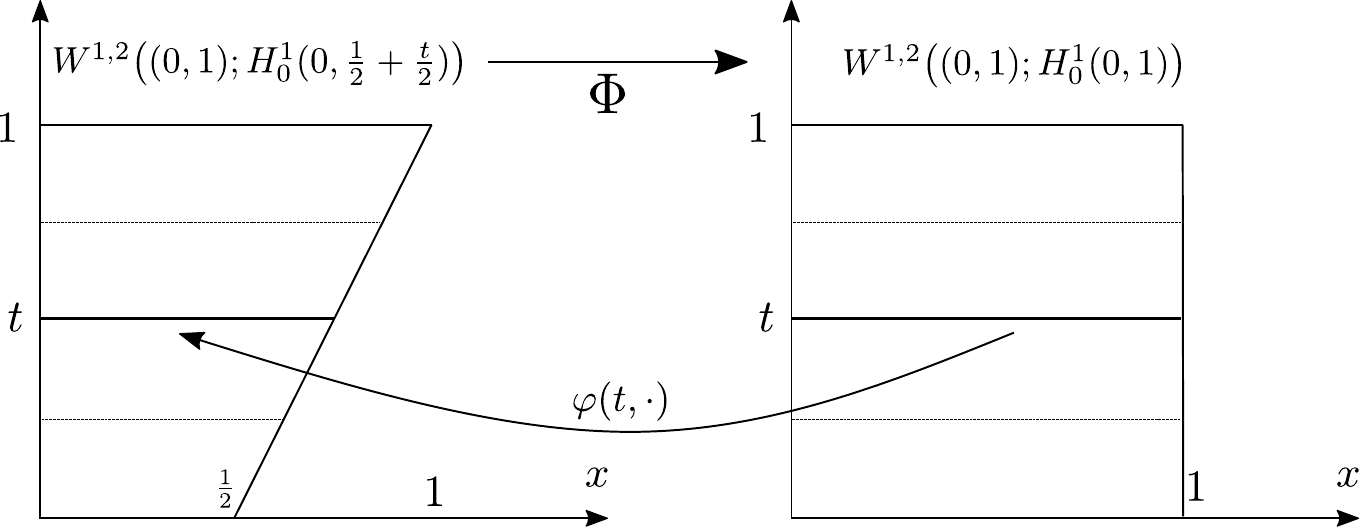}
\caption{} \label{fig:3}
\end{figure}

We will show that \eqref{theorem:main2:eq1} fails.  
To do this we fix $0<s<t<1$ and a point $a\in (0, 1/2+s/2)$.  
Take a sequence $f_n(x) = C|x-a|^{\frac{2}{3}}\eta_0(x)\eta_n(x)$, $n\in\mathbb N$, where:
\begin{itemize} 
\item $\eta_0\in C^\infty_0(Q_s)$ and $\eta_0=1$ on $[\delta, 1/2+s/2-\delta]$,
\item $\eta_n\in C^\infty(Q_s)$ and $\eta_n(x)=0$ for  $x\in [a-1/n, a+1/n]$ and $\eta_n(x)=1$ for  $x\in Q_s\setminus [a-2/n, a+2/n]$,
\item the constant $C$ such that $\|f_n\|_{H^1_0(Q_s)} \leq 1$.  
\end{itemize}
Then $L^2$-norm of $f''_n$ tends to infinity. Using this fact and the Taylor expansion in $1/2(1+s)x$ we obtain 
\begin{multline*}
\frac{1}{t-s}\big\|(C_{\varphi(t,\cdot)}P(s,t) - C_{\varphi(s,\cdot)} ) f_n\big\|_{H^1_0(Q_T)}\\ 
= \frac{1}{t-s}\big\|f_n(1/2(1+t)x) - f_n(1/2(1+s)x)\big\|_{H^1_0(Q_T)}\\
\geq \frac{1}{2}\frac{1}{t-s}\big\|(1+t)f'_n(1/2(1+t)x) - (1+s)f'_n(1/2(1+s)x)\|_{L^2(Q_T)}\\
= \frac{1}{2}\frac{1}{t-s}\big\| f_n''(1/2(1+s)x)\frac{1}{2}(t-s)x + o((t-s)x) \\
+ (t-s)f_n'(1/2(1+s)x) + tf_n''(1/2(1+s)x)\frac{1}{2}(t-s)x + o(t(t-s)x) \big\|_{L^2(Q_T)}\\
= \frac{1}{2}\big\| (1+t)f_n''(1/2(1+s)x)\frac{1}{2}x + f_n'(1/2(1+s)x) +  o(x) \big\|_{L^2(Q_T)} \to \infty \text{ as }  n \to \infty.
\end{multline*}

\end{example}

\section{Scalar characterization}\label{Scalar}
The space $L^{p}((0,T); \{X_t\})$ admits a scalar description:
a measurable section $u(t)$ belongs to $L^{p}((0,T); \{X_t\})$ iff $\|u(t)\|_t\in L^{p}((0,T))$. 
In general case, there is no such characterization for the Sobolev space $W^{1,p}((0,T); \{X_t\})$.
Nevertheless, if $u\in W^{1,p}((0,T); \{X_t\})$ then the norm $\|u(t)\|_t$ enjoys some regularity properties. 
For example, in general settings by rather standard methods one can prove the following inequality:
$$
\|u(t)\|_{L^{\infty}((0,T);\{X_t\})} 
\leq C\|u\|_{1,p} + \|\partial_tN(t,u(t))\|_{L^{p}((0,T); \{X_t\})}
$$
for any  $u\in W^{1,p}((0,T); \{X_t\})$, $1\leq p<\infty$.

We can obtain the scalar characterization in the simplest form when additional conditions are imposed on the norm function. Namely, the following theorem is hold:

\begin{thm}\label{thm:scalar-W}
Let $u\in W^{1,p}((0,T);\{X_t\})$ and assume that\\
(1) For any $v\in V$ function $t\mapsto N(t, v)$ belongs to $W^{1,p}((0,T))$;\\
(2) Sobolev derivatives $\partial_t N(t, v)$ have a majorant $H(t)\in L^p((0,T))$.

Then $\|u(t)\|_t \in W^{1,p}((0,T))$.
\end{thm}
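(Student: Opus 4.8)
The plan is to show that $t\mapsto\|u(t)\|_t$ is absolutely continuous on every compact subinterval of $(0,T)$, that its a.e.\ derivative is dominated by an $L^p$ function, and then conclude membership in $W^{1,p}((0,T))$. First I would fix a compact interval $[a,b]\subset(0,T)$ and, by Theorem~\ref{thm:main1}, pass to a continuous representative of $u$ with a weak derivative $u'\in L^p((0,T);\{X_t\})$ satisfying \eqref{eq:prop:sobolev-int}. The first key step is to control the increments of the scalar function directly: for $a\le s\le t\le b$, using monotonicity \eqref{eq:A2} and Proposition~\ref{prop:sobolev-int},
\begin{equation*}
\big|\,\|u(t)\|_t - \|u(s)\|_s\,\big|
\le \big|\,\|u(t)\|_t - \|P(s,t)u(s)\|_t\,\big| + \big|\,\|P(s,t)u(s)\|_t - \|u(s)\|_s\,\big|.
\end{equation*}
The first term is bounded by $\|u(t)-P(s,t)u(s)\|_t\le\int_s^t\|u'(\tau)\|_\tau\,d\tau$. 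The second term is the genuinely new ingredient: it measures how much the \emph{norm itself} changes along the transition operator, and this is exactly where hypotheses (1)--(2) enter. I would fix a simple section (constant in $t$, equal to some $v\in V$) approximating $u(s)$ in $X_s$, write $\|P(s,t)v\|_t-\|v\|_s = N(t,v)-N(s,v)=\int_s^t\partial_\tau N(\tau,v)\,d\tau$, bound the integrand by the majorant $H(\tau)$, and then remove the approximation by density of $V$ in $X_s$ together with continuity of $N(\cdot,v)$; the point is that the bound $\int_s^t H(\tau)\,d\tau$ is \emph{independent} of which $v$ is used, so it survives the limit and gives
\begin{equation*}
\big|\,\|P(s,t)u(s)\|_t - \|u(s)\|_s\,\big| \le \int_s^t H(\tau)\,d\tau.
\end{equation*}

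Combining the two estimates yields $\big|\,\|u(t)\|_t-\|u(s)\|_s\,\big|\le\int_s^t\big(\|u'(\tau)\|_\tau+H(\tau)\big)\,d\tau$ for all $a\le s\le t\le b$, with $\|u'(\cdot)\|_\cdot+H\in L^p((0,T))\subset L^1_{\mathrm{loc}}((0,T))$. This single inequality does all the remaining work: absolute continuity of $t\mapsto\|u(t)\|_t$ on $[a,b]$ follows from absolute continuity of the Lebesgue integral, hence the scalar function is differentiable a.e., and for a.e.\ $t$ its derivative satisfies $\big|\frac{d}{dt}\|u(t)\|_t\big|\le\|u'(t)\|_t+H(t)$, which is an $L^p$ bound. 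Since $[a,b]$ was arbitrary and the $L^p$ bound on the derivative is global (the majorant $H$ and $u'$ do not depend on $[a,b]$), standard one-dimensional Sobolev theory gives $\|u(\cdot)\|_\cdot\in W^{1,p}_{\mathrm{loc}}((0,T))$ with $\|\,\frac{d}{dt}\|u(t)\|_t\,\|_{L^p((0,T))}\le\|u'\|_{L^p((0,T);\{X_t\})}+\|H\|_{L^p((0,T))}<\infty$, whence $\|u(\cdot)\|_\cdot\in W^{1,p}((0,T))$.

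The main obstacle I anticipate is making the density/approximation argument for the term $\big|\|P(s,t)u(s)\|_t-\|u(s)\|_s\big|$ fully rigorous: one must verify that the pointwise-in-$v$ bound $|N(t,v)-N(s,v)|\le\int_s^t H(\tau)\,d\tau$ is uniform enough to pass to the limit along a sequence $v_k\to u(s)$ in $X_s$, using that $v_k\to u(s)$ in $X_s$ forces $P(s,t)v_k\to P(s,t)u(s)$ in $X_t$ (by \eqref{eq:A2}, as in the construction of $P(s,t)$) and hence $\|P(s,t)v_k\|_t\to\|P(s,t)u(s)\|_t$, together with $\|v_k\|_s\to\|u(s)\|_s$. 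Hypothesis (1) guarantees $N(\cdot,v)$ is in $W^{1,p}((0,T))$ so the fundamental-theorem-of-calculus step is legitimate for each $v$, and hypothesis (2) supplies the $v$-independent $L^p$ bound needed to survive the limit; a secondary care point is that one should check the measurability of $t\mapsto\|u(t)\|_t$ and of its difference quotients, but this is already available from the remarks following the definition of the direct integral.
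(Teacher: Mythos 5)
Your decomposition is exactly the paper's: split $\big|\,\|u(t)\|_t-\|u(s)\|_s\,\big|$ into the ``element change'' term $\|u(t)-P(s,t)u(s)\|_t$ and the ``norm change'' term $\big|N(t,\cdot)-N(s,\cdot)\big|$ evaluated at $u(s)$, bound the first by an integral of a gradient and the second by $\int_s^t H(\tau)\,d\tau$ via hypotheses (1)--(2), and conclude from the resulting absolute-continuity estimate. Two remarks on the differences. First, there is one avoidable weak point: you bound the first term by $\int_s^t\|u'(\tau)\|_\tau\,d\tau$ after invoking Theorem \ref{thm:main1} and Proposition \ref{prop:sobolev-int}, but Theorem \ref{thm:main1} is proved only for a monotone family of \emph{reflexive} spaces, and no reflexivity is assumed in the present theorem; as written, your argument therefore covers only that case. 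The detour is unnecessary: Definition \ref{defn:W1} already hands you an upper gradient $g\in L^p((0,T))$ with $\|u(t)-P(s,t)u(s)\|_t\le\int_s^t g(\tau)\,d\tau$ for a.e.\ $s\le t$, which is precisely what the paper uses ($g_u$ in its proof), and a.e.\ validity of the increment estimate is enough for the one-dimensional Sobolev conclusion. Replace $\|u'(\tau)\|_\tau$ by $g(\tau)$ and the proof matches the statement in full generality. Second, your treatment of the norm-change term is actually more careful than the paper's: the paper applies (1)--(2) directly to $v=u(s)$, which lies in $X_s$ rather than in $V$, while you justify this by approximating $u(s)$ by elements $v_k\in V$, noting that monotonicity \eqref{eq:A2} gives $\|v_k-u(s)\|_t\le\|v_k-u(s)\|_s\to0$, so both $\|v_k\|_t\to\|P(s,t)u(s)\|_t$ and $\|v_k\|_s\to\|u(s)\|_s$, and the $v$-independent bound $\int_s^t H(\tau)\,d\tau$ survives the limit. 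That density step is a genuine (if small) improvement in rigor over the published argument; with the gradient substitution above, your proof is correct and essentially the paper's.
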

\begin{proof}
$\|u(t)\|_t \in L^{p}((0,T))$ by the definition.

Suppose that $t>s$. Then
\begin{multline*}
\big| \|u(t)\|_t -\|u(s)\|_s \big|
\leq \big| \|u(t)\|_t -\|u(s)\|_t \big| + \big| \|u(s)\|_t -\|u(s)\|_s \big|\\
\leq \|u(t) - u(s)\|_t + \big| N(t, u(s)) - N(s, u(s)) \big|\\
\leq \int^{t}_{s} g_u(\tau)\, d\tau + \int^{t}_{s} |\partial_t N(\tau, u(s))|\, d\tau
\leq \int^{t}_{s} g_u(\tau)+H(\tau)\, d\tau
\end{multline*}
\end{proof}

\begin{cor}
Suppose (1) and (2) from theorem \ref{thm:scalar-W} hold true.
Then there exists a constant $C$ such that
$$
\|u\|_{L^{\infty}((0,T); \{X_t\})} \leq C\|u\|_{W^{1,p}((0,T);\{X_t\})}
$$ 
for any $u\in W^{1,p}((0,T);\{X_t\})$.
\end{cor}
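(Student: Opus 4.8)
The plan is to reduce the statement to the classical one-dimensional Sobolev embedding $W^{1,p}((0,T))\hookrightarrow L^{\infty}((0,T))$ applied to the scalar function $\varphi(t):=\|u(t)\|_t$, which Theorem~\ref{thm:scalar-W} already places in $W^{1,p}((0,T))$.

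First I would observe that $\|u\|_{L^{\infty}((0,T);\{X_t\})}=\esssup_{t\in(0,T)}\|u(t)\|_t=\|\varphi\|_{L^{\infty}((0,T))}$, so it is enough to control $\|\varphi\|_{L^{\infty}((0,T))}$. The chain of inequalities in the proof of Theorem~\ref{thm:scalar-W} shows more than membership in $W^{1,p}$: for every $p$-integrable upper gradient $g$ of $u$ one has $|\varphi(t)-\varphi(s)|\le\int_s^t\bigl(g(\tau)+H(\tau)\bigr)\,d\tau$ for $s\le t$, so $\varphi$ is locally absolutely continuous with $|\varphi'(\tau)|\le g(\tau)+H(\tau)$ a.e. Consequently
\[
\|\varphi\|_{W^{1,p}((0,T))}\le\|u\|_{L^{p}((0,T);\{X_t\})}+\|g\|_{L^{p}((0,T))}+\|H\|_{L^{p}((0,T))},
\]
and taking the infimum over all upper gradients $g$ gives $\|\varphi\|_{W^{1,p}((0,T))}\le\|u\|_{W^{1,p}((0,T);\{X_t\})}+\|H\|_{L^{p}((0,T))}$, where the last term is a fixed constant attached to the family $\{X_t\}$ (and disappears when $T=\infty$, since then $\inf|\varphi|=0$ and only $\|\varphi'\|_{L^p}$ is needed).

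Second I would invoke the standard embedding $W^{1,p}(I)\hookrightarrow L^{\infty}(I)$ for an interval $I$: there is $C_1=C_1(p,T)$ with $\|\psi\|_{L^{\infty}((0,T))}\le C_1\|\psi\|_{W^{1,p}((0,T))}$ for all $\psi\in W^{1,p}((0,T))$. For $1\le p<\infty$ this is obtained by writing $\psi(t)=\psi(t_0)+\int_{t_0}^{t}\psi'$ for a Lebesgue point $t_0$, estimating $|\psi(t_0)|$ by a mean value of $|\psi|$ and the integral by Hölder. Applying this with $\psi=\varphi$ and combining with the previous step yields $\|u\|_{L^{\infty}((0,T);\{X_t\})}\le C_1\bigl(\|u\|_{W^{1,p}((0,T);\{X_t\})}+\|H\|_{L^{p}((0,T))}\bigr)$, which is the claimed inequality (the additive constant being absorbed into $C$ or written explicitly, depending on the convention one fixes for $H$).

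I do not anticipate any real obstacle: the entire content sits in Theorem~\ref{thm:scalar-W} together with a textbook estimate. The only items demanding a little care are extracting from the proof of Theorem~\ref{thm:scalar-W} the quantitative bound $|\varphi'|\le g+H$ (as opposed to the qualitative conclusion $\varphi\in W^{1,p}$), which is immediate there, and the dependence of $C_1$ on $T$ when $(0,T)$ is unbounded, which affects only the bookkeeping of constants and not the validity of the bound.
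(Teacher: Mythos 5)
Your route---apply Theorem \ref{thm:scalar-W} to place $\varphi(t):=\|u(t)\|_t$ in $W^{1,p}((0,T))$ with the quantitative bound $|\varphi(t)-\varphi(s)|\le\int_s^t\bigl(g(\tau)+H(\tau)\bigr)\,d\tau$, then invoke the classical embedding $W^{1,p}((0,T))\hookrightarrow L^{\infty}((0,T))$---is exactly the argument this corollary is meant to rest on (the paper supplies no explicit proof, and no other ingredient is available). The genuine gap is your final absorption step. What your chain of estimates delivers is
$$
\|u\|_{L^{\infty}((0,T);\{X_t\})}\;\le\; C_1\bigl(\|u\|_{W^{1,p}((0,T);\{X_t\})}+\|H\|_{L^{p}((0,T))}\bigr),
$$
and the additive term $C_1\|H\|_{L^p}$ is a fixed number independent of $u$. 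It cannot be absorbed into the multiplicative constant $C$ of the stated inequality, whose two sides are positively homogeneous of degree one in $u$: replacing $u$ by $\lambda u$ and letting $\lambda\to0^+$ sends the right-hand side of the claimed bound to $0$, while your estimate retains the constant $C_1\|H\|_{L^p}$. So, as written, the proposal proves an inhomogeneous variant of the corollary, not the corollary itself.

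To close the gap you must either state and prove the bound in that inhomogeneous form (which is all the displayed computation in Theorem \ref{thm:scalar-W} gives), or exploit the fact that hypothesis (2) provides one majorant $H$ for all $v\in V$, hence also for $\lambda u(s)$: applying the estimate of Theorem \ref{thm:scalar-W} to $\lambda u$ (which has upper gradient $\lambda g$) and dividing by $\lambda$ gives $|\varphi(t)-\varphi(s)|\le\int_s^t\bigl(g(\tau)+\lambda^{-1}H(\tau)\bigr)\,d\tau$, and letting $\lambda\to\infty$ yields $|\varphi'|\le g$ a.e., after which the one-dimensional embedding gives the stated multiplicative inequality with $C=C(p,T)$. (Be aware that the same scaling applied to (2) itself, $|\partial_tN(t,\lambda v)|=\lambda|\partial_tN(t,v)|\le H(t)$, forces $\partial_tN(t,v)=0$, so under a non-degenerate, homogeneous reading of (2) such as $|\partial_tN(t,v)|\le H(t)N(t,v)$ the $H$-contribution reappears and $C$ must depend on $\|H\|_{L^p}$, via a Gr\"onwall-type step.) Your parenthetical remark about $T=\infty$ is likewise not an argument; for an unbounded interval one estimates $\|\varphi\|_{L^\infty}$ directly from $\|\varphi\|_{L^p}+\|\varphi'\|_{L^p}$ by the standard Lebesgue-point computation you already quoted.
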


The following theorems gives us another scalar characteristic and establishes a link with the approach of Yu.~G.~Reshetnyak
to define Sobolev spaces of functions with values in a metric space \cite{Reshetnyak97}.

\begin{thm}\label{theorem:Reshetnyak} 
If $u\in L^0((0,T);\{X_t\})$ and the following two assumptions hold:\\
(A) for any $v\in V$ the function $\psi_v(t) = \|u(t) - v\|_{t} \in W^{1,p}((0,T))$,\\
(B) the family of derivatives $\{\psi'_v(t)\}_{v\in V}$ has a majorant $\psi'\in L^p((0,T))$,

then $u\in W^{1,p}((0,T);\{X_t\})$.
\end{thm}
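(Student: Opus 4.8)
The plan is to verify directly the two requirements of Definition \ref{defn:W1}: that $u\in L^p((0,T);\{X_t\})$ and that the common majorant $\psi'$ (which we may and do take nonnegative, since $|\psi'_v|\le\psi'$ a.e.\ for each $v$) serves, up to the usual almost‑everywhere qualification, as a $p$‑integrable upper gradient of $u$. Membership in $L^p$ is immediate: $0\in V$, so assumption (A) applied to $v=0$ gives $\|u(t)\|_t=\psi_0(t)\in W^{1,p}((0,T))\subset L^p((0,T))$, whence $u\in L^p((0,T);\{X_t\})$. For the gradient estimate the basic elementary inequality is that, for every $v\in V$ and all $s<t$,
\[
\|u(t)-P(s,t)u(s)\|_t\le \|u(t)-v\|_t+\|P(s,t)(v-u(s))\|_t\le \psi_v(t)+\psi_v(s),
\]
where the last step uses the monotonicity of the family in the form $\|P(s,t)x\|_t\le\|x\|_s$ (a consequence of \eqref{eq:A2} and the definition of $P(s,t)$ as the continuous extension of the identity on $V$).

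Next I would pass, for each fixed $v$, to the locally absolutely continuous representative $\widetilde\psi_v$ of $\psi_v\in W^{1,p}((0,T))$: there is a null set $Z_v$ with $\psi_v=\widetilde\psi_v$ off $Z_v$, and for $s<t$ one has $\widetilde\psi_v(t)-\widetilde\psi_v(s)=\int_s^t\psi_v'(\tau)\,d\tau\le\int_s^t\psi'(\tau)\,d\tau$ by (B). Hence, for $s,t\notin Z_v$ with $s<t$,
\[
\|u(t)-P(s,t)u(s)\|_t\le 2\psi_v(s)+\int_s^t\psi'(\tau)\,d\tau .
\]
The remaining issue is that the exceptional set $Z_v$ depends on $v$, so I would not quantify over all of $V$ but over a suitable countable subfamily. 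Fix once and for all a sequence of simple sections $f_k$ with $f_k(t)\to u(t)$ in $X_t$ for a.e.\ $t$ (these exist since $u$ is measurable), and let $\{v_m\}_{m\in\mathbb N}$ be the countable collection of all values attained by the $f_k$. Put $Z=\bigl(\bigcup_m Z_{v_m}\bigr)\cup\{t:f_k(t)\not\to u(t)\}$, a null set. For $s,t\notin Z$, $s<t$, the displayed bound holds with $v=v_m$ for every $m$; and since $f_k(s)\in\{v_m\}$ and $f_k(s)\to u(s)$ in $X_s$, we get $\inf_m\psi_{v_m}(s)=\inf_m\|u(s)-v_m\|_s=0$. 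Letting $m$ vary yields $\|u(t)-P(s,t)u(s)\|_t\le\int_s^t\psi'(\tau)\,d\tau$ for all such $s<t$, i.e.\ for almost every pair $s\le t$. Thus $\psi'\in L^p((0,T))$ is a $p$‑integrable upper gradient of $u$, and $u\in W^{1,p}((0,T);\{X_t\})$.

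The $L^p$ membership and the passage to the absolutely continuous representative of each scalar function $\psi_v$ are routine; the point that genuinely needs care — and which I regard as the heart of the argument, exactly as in Reshetnyak's original treatment — is the bookkeeping of null sets: one must produce a single countable family $\{v_m\}$ that is simultaneously rich enough to approximate $u(s)$ in $X_s$ at almost every $s$ and small enough that $\bigcup_m Z_{v_m}$ stays null. Using the values of a fixed approximating sequence of simple sections is what makes this work without imposing any separability hypothesis on $V$ or on the fibers $X_t$.
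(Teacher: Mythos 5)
Your proof is correct and follows essentially the same route as the paper: $L^p$-membership from (A) with $v=0$, and the upper-gradient estimate by reducing to the scalar functions $\psi_v$, using the fundamental theorem of calculus for their absolutely continuous representatives together with the majorant $\psi'$, and then approximating $u(s)$ by elements of $V$ (the paper takes a sequence $v_k\to u(t_0)$ in $X_{t_0}$ and passes to the limit, which is the same limiting step as your infimum over $\{v_m\}$). The one point where you go beyond the paper is the null-set bookkeeping: the paper's sequence $v_k$ is chosen for each fixed $t_0$ and the paper tacitly ignores that the exceptional set on which $\psi_v$ may differ from its absolutely continuous representative depends on $v$, whereas your single countable family drawn from the values of an approximating sequence of simple sections makes this step airtight.
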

\begin{proof}
1) From (A) when $v=0$ derive $\|u(t)\|_t \in L^p((0,T))$ which implies $u\in L^p((0,T);\{X_t\})$.

2) From (A) and (B) infer that for any $v\in V$
\begin{equation}\label{eq:resh}
\big| \|u(t) - v\|_{t} - \|u(t_0) - v\|_{t_0} \big|
\leq  \int^{t}_{t_0} |\psi'_v(s)|\, ds
\leq  \int^{t}_{t_0} |\psi'(s)|\, ds.
\end{equation}
Let $t\geq t_0$. Choose a sequence $\{v_k\}\subset V$ that
$\|u(t_0) - v_k\|_{t_0}\to 0 $ as $k\to\infty$. 
Then, passage to the limit in \ref{eq:resh}, derive
$$
\|u(t) - u(t_0)\|_{t} \leq  \int^{t}_{t_0} |\psi'(s)|\, ds.
$$
\end{proof}
The converse to \ref{theorem:Reshetnyak} does not hold in general.
The following example shows that 
condition (A) fails
in some cases. 
\begin{example}
Let $V$ be the vector space of all continuous functions defined on $(0,1)$, and $T = 1$. 
Define a family of norms on $V$ 
$$
\|x\|_t
=\begin{cases}
\sup\limits_{(0,1)}|x(s)|, & 0< t<0.5\\
\sup\limits_{(0,1/2)}|x(s)|, & 0.5\leq t< 1,
\end{cases}
$$
where $x(s)\in C((0,1))$. 
Let $D_t$ be a completion of $C(0,1)/\ker \|\cdot\|_t$ with respect to $\|\cdot\|_t$.
Then the function $u(t)(s) = s$ belongs to $W^{1,p}((0,1); \{D_t\})$, $u'(t) = 0$.
On the other hand 
$$
\|u(t)\|_t
=\begin{cases}
1, & 0< t<0.5\\
0.5, & 0.5\leq t< 1,
\end{cases}
$$
is obviously out of space  $W^{1,p}(0,1)$.

\end{example}


\bibliography{direct}

\end{document}